\theoremstyle:=definition,remark,plain\do{%
 \expandafter\g@addto@macro\csname th@\theoremstyle\endcsname{%
 \addtolength\thm@preskip\parskip
 }%
 }
\newcounter{noteCounter}
\newtheorem{prop}{Proposition}
\newtheorem{thm}[prop]{Theorem}
\newtheorem{lemm}[prop]{Lemma}
\newtheorem{coro}[prop]{Corollary}
\newtheorem*{claim*}{Claim}
\theoremstyle{definition}
\newtheorem{defi}[prop]{Definition}
\newtheorem{rmk}[prop]{Remark}
\newcommand{\CC}{\mathbb{C}}
\newcommand{\HH}{\mathbb{H}}
\newcommand{\NN}{\mathbb{N}}
\newcommand{\PP}{\mathbb{P}}
\newcommand{\RR}{\mathbb{R}}
\newcommand{\cJ}{\mathcal J}
\DeclareMathOperator{\tr}{tr}
\DeclareMathOperator{\Sym}{Sym}
\DeclareMathOperator{\Id}{Id}
\DeclareMathOperator{\Endo}{End}
\DeclareMathOperator{\vol}{\mathrm{vol}}
\newcommand{\bangle}[1]{\big\langle #1 \big\rangle}
\newcommand{\hk}{\mathbin{\! \hbox{\vrule height0.3pt width5pt depth 0.2pt \vrule height5pt width0.4pt depth 0.2pt}}}
\newcommand{\rom}[1]{\expandafter\romannumeral #1}
\newcommand{\Rom}[1]{\uppercase\expandafter{\romannumeral #1}}
\setlist[enumerate]{leftmargin = 2em}
\numberwithin{equation}{section}
\begin{document}

\title{Holomorphicity of parabolic stable minimal surfaces \\ of high codimension}

\author{Da Rong Cheng \\ \textit{Department of Mathematics, University of Miami} \\ \tt{darong.cheng@miami.edu} \and Spiro Karigiannis \\ \textit{Department of Pure Mathematics, University of Waterloo} \\ \tt{karigiannis@uwaterloo.ca} \and Jesse Madnick \\ \textit{Department of Mathematics and Computer Science, Seton Hall University} \\ \tt{jesse.madnick@shu.edu}}

\maketitle

\begin{abstract}
A classical theorem of Micallef says that if $F \colon (\Sigma, g) \to \RR^4$ is a stable minimal immersion of an oriented $2$-dimensional complete Riemannian manifold (that is parabolic) into $\RR^4$, it is necessarily holomorphic with respect to some parallel orthogonal complex structure on $\RR^4$. We generalize this theorem by replacing $\RR^4$ with $\RR^{2 + 2k}$ for any codimension $2k$, under the additional hypothesis that the normal bundle $N \Sigma$ is equipped with a complex structure that is compatible with the induced metric and parallel with respect to the induced connection. This is a necessary assumption for such a theorem to hold, and it is automatically satisfied in the classical case $k=1$. We also briefly discuss possible further generalizations of such a result to other calibrations and to Smith maps.
\end{abstract}

\tableofcontents

\section{Introduction} \label{sec:intro}

A ubiquitous theme in geometry is that certain preferred classes of geometric objects arise as critical points of natural first order functionals, and thus satisfy the associated second order Euler-Lagrange equation. Some examples include: Yang-Mills connections, minimal immersions, and harmonic maps. In certain cases, if there is additional geometric structure present, there can exist \emph{special} solutions to such equations which actually satisfy a first order PDE that implies the second order equation, and moreover such special solutions are \emph{local minimizers}, rather than just critical points. (In fact, they are ``global'' minimizers in an appropriate sense.) Some examples include: instanton connections, calibrated immersions, and holomorphic maps between compact almost K\"ahler manifolds. 

It is thus natural to ask the reverse question: suppose that we had a \emph{stable} (locally minimizing) critical point of such a functional. Under what additional hypotheses can we guarantee that in fact such an object must be one of these special first-order solutions? There are numerous such theorems in the literature. Here we state only three classic examples:
\begin{itemize}
\item A stable Yang-Mills connection over $S^4$ with gauge group $\mathrm{SU}(2)$ or $\mathrm{SU}(3)$ must be an instanton (self-dual or anti-self-dual). This is a theorem of Bourguignon--Lawson~\cite{BL}.
\item A stable harmonic map from a compact Riemann surface into a compact K\"ahler manifold with positive holomorphic bisectional curvature must be $\pm$-holomorphic. This is a theorem of Siu--Yau~\cite{SY}. (See also~\cite{EL}.)
\item A stable minimal immersion of an oriented complete Riemannian $2$-dimensional manifold $(\Sigma, g)$ (that is parabolic) into $\RR^4$ must be holomorphic with respect to a parallel orthogonal complex structure on $\RR^4$. This is a theorem of Micallef~\cite[Theorem I]{Micallef}. (Here ``parabolic'' is a technical assumption on $(\Sigma, g)$ which says that every positive function $h$ on $\Sigma$ satisfying $\Delta h \leq 0$ must be constant.)
\end{itemize}

In~\cite{Micallef}, Micallef also generalized this last theorem to higher-dimensional ambient Euclidean spaces, assuming that the oriented complete stable minimal surface $(\Sigma, g)$ has finite total curvature and genus zero. (See~\cite[Theorem IV]{Micallef}.) This was recently extended to the genus one case by Fraser--Schoen~\cite[Theorem 1.1]{FS}, under the additional hypothesis that the minimal immersions obtained by passing to finite covers of $(\Sigma, g)$ are all stable, a condition known as covering stability. In both theorems, the proof relies on classification results for holomorphic vector bundles over the type of Riemann surfaces under consideration, and has a different flavor than that of~\cite[Theorem I]{Micallef}.

In the present paper we obtain another generalization of~\cite[Theorem I]{Micallef} to higher codimensions, under a set of assumptions that permits us to carry over the ideas behind the original proof. The precise statement is the following.

\begin{thm}\label{thm:main}
Let $(\Sigma, g)$ be an oriented $2$-dimensional complete Riemannian manifold that is parabolic. Suppose $F \colon (\Sigma, g) \to \RR^{2 + 2k}$ is a \emph{stable minimal} isometric immersion. Assume further that the normal bundle $N\Sigma$ admits a complex structure $J_N$ that is compatible with the induced metric and parallel with respect to the induced connection. Then $\Sigma$ is holomorphic with respect to some parallel orthogonal complex structure $\overline{J}$ on $\RR^{2 + 2k}$.
\end{thm}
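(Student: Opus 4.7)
The plan is to adapt Micallef's original codimension-2 argument by using the hypothesized parallel complex structure $J_N$ on $N\Sigma$. Let $J_\Sigma$ denote the complex structure on $T\Sigma$ determined by the orientation and the induced metric, and consider the block-diagonal endomorphism $J_{\mathrm{tot}} := J_\Sigma \oplus J_N$ of the pullback bundle $F^*T\RR^{2+2k} = T\Sigma \oplus N\Sigma$. Since $J_\Sigma$ and $J_N$ are each parallel with respect to their respective tangential and normal connections, a direct computation using the Gauss--Weingarten relations shows that $J_{\mathrm{tot}}$ is parallel with respect to the pulled-back Euclidean flat connection on $F^*T\RR^{2+2k}$ if and only if the intertwining relation $J_N A(X,Y) = A(J_\Sigma X, Y)$ holds for all $X, Y \in T\Sigma$. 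Parallelism with respect to the flat connection then implies (via any global parallel frame) that $J_{\mathrm{tot}}$ extends to a constant orthogonal complex structure $\overline{J}$ on $\RR^{2+2k}$, with respect to which $F$ is tautologically holomorphic. Because $-J_N$ is also admissible, I obtain two candidate structures; it therefore suffices to verify the intertwining relation for at least one of the two sign choices.

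In a local conformal coordinate $z$, set $\alpha := A(\partial_z, \partial_z) \in N\Sigma \otimes_\RR \CC$ and decompose $\alpha = \alpha^{1,0} + \alpha^{0,1}$ into the $\pm i$-eigencomponents of the complex-linear extension of $J_N$. A short pointwise check shows that the intertwining relation for $+J_N$ (resp.\ $-J_N$) is equivalent to $\alpha^{0,1} \equiv 0$ (resp.\ $\alpha^{1,0} \equiv 0$). The Codazzi equation in the flat ambient $\RR^{2+2k}$, combined with minimality (so that $A(\partial_z, \partial_{\bar z}) = 0$), yields $\nabla^\perp_{\partial_{\bar z}} \alpha = 0$; since $J_N$ is $\nabla^\perp$-parallel, the splitting is preserved and both $\alpha^{1,0}$ and $\alpha^{0,1}$ are separately holomorphic sections of the Hermitian rank-$k$ bundles $N^{1,0}\Sigma \otimes K^{\otimes 2}$ and $N^{0,1}\Sigma \otimes K^{\otimes 2}$, where $K = (T^{1,0}\Sigma)^*$. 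The goal thus reduces to showing that at least one of these two holomorphic sections vanishes identically.

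To achieve this, I would feed into the stability inequality $\int_\Sigma (|\nabla^\perp V|^2 - |A_V|^2)\, \dvol \geq 0$ real-valued normal test variations $V$ constructed, via the canonical real structure on $N\Sigma \otimes \CC$, from $\eta \cdot \alpha^{1,0}$ and $\eta \cdot \alpha^{0,1}$ with a compactly supported cutoff $\eta$. After integration by parts, a Bochner-type identity for holomorphic sections, and the Gauss equation $K_\Sigma = -|\alpha|^2 = -(|\alpha^{1,0}|^2 + |\alpha^{0,1}|^2)$ (up to a positive constant, valid in a flat ambient with minimality), I expect a differential inequality of the schematic form
\[ \int_\Sigma \eta^2 \, Q(\alpha^{1,0}, \alpha^{0,1}) \, \dvol \;\leq\; C \int_\Sigma |\nabla \eta|^2 \, (|\alpha^{1,0}|^2 + |\alpha^{0,1}|^2) \, \dvol, \]
where $Q$ is a pointwise nonnegative quadratic expression in $\alpha^{1,0}, \alpha^{0,1}$ that vanishes precisely when one of the two sections is zero. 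Parabolicity of $(\Sigma, g)$ supplies a sequence of cutoffs $\eta_j \to 1$ with $\int |\nabla \eta_j|^2 \phi \to 0$ for any integrable density $\phi$ (the standard capacity characterization of parabolicity), which forces $Q(\alpha^{1,0}, \alpha^{0,1}) \equiv 0$; by unique continuation of holomorphic sections, one of $\alpha^{1,0}, \alpha^{0,1}$ is then identically zero.

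The principal obstacle will be isolating and verifying the sign-definiteness of the quadratic $Q$. In Micallef's codimension-2 setting, $N^{1,0}\Sigma$ and $N^{0,1}\Sigma$ are complex line bundles, so $\alpha^{1,0}, \alpha^{0,1}$ are essentially scalar-valued holomorphic quantities and the stability algebra collapses to a single Cauchy--Schwarz-type estimate. In higher codimension these sections are genuinely vector-valued, and the pointwise algebra of $|A_V|^2$ involves cross-couplings across all rank-$k$ directions of $N^{1,0}\Sigma$ and $N^{0,1}\Sigma$. The parallel metric-compatible $J_N$ reduces the normal holonomy to $U(k)$, keeping the normal curvature compatible with the $(1,0)/(0,1)$ splitting and thereby providing precisely the structure needed for the cross-terms to assemble with a definite sign. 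Confirming that the resulting $Q$ is nonnegative and that its zero locus is exactly $\{\alpha^{1,0} \otimes \alpha^{0,1} = 0\}$ is the technical heart of the argument.
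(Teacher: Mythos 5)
Your outer framing coincides with the paper's: the equivalence between parallelism of $J_\Sigma \oplus (\pm J_N)$ on $F^*T\RR^{2+2k}$ and the intertwining relation $A(J_\Sigma v,w)=\pm J_N A(v,w)$ is Proposition~\ref{prop:infinitesimally-holomorphic} (with Remark~\ref{rmk:pm-holomorphic}), the holomorphicity/antiholomorphicity of the two components of $A$ via Codazzi and minimality is Proposition~\ref{prop:Apm-holo}, the unique continuation step is Corollary~\ref{coro:UCP}--Proposition~\ref{prop:Apm-vanish-global}, and the extension of the parallel $J$ to a constant $\overline J$ is exactly how the paper concludes. The genuine gap is the middle step, which you yourself flag as ``the technical heart'': you never exhibit the quadratic $Q$, and your proposed test variations --- cutoffs times $\alpha^{1,0}$, $\alpha^{0,1}$, i.e.\ sections manufactured from the second fundamental form itself --- are not the ones that make the argument close. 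Substituting such sections into the second variation produces $|D(\eta\,\alpha^{1,0})|^2$, hence derivative terms in $DA$, and there is no identity making the resulting expression pointwise nonnegative with zero locus $\{\alpha^{1,0}\otimes\alpha^{0,1}=0\}$. The paper (following Micallef) instead uses the variations $f\,J_N a^\perp$ with $a$ a \emph{constant} vector of $\RR^{2+2k}$: because $a^\perp$ is a Jacobi field (Lemma~\ref{lemm:a-Jacobi}), the second variation collapses to $\int_\Sigma(|\nabla f|^2|a^\perp|^2+f^2 q(a))$ with $q(a)=|A^{a^\perp}|^2-|A^{J_N a^\perp}|^2$, and the decisive algebraic input is the trace identity $\sum_i q(e_i)=0$ over an orthonormal basis of $\RR^{2+2k}$ (Lemma~\ref{lemm:Q-computation}); this is precisely where the parallel orthogonal $J_N$ is used, and the paper notes in Section~\ref{sec:conclusion} that the analogous trace fails to vanish for other calibrations, so this identity cannot be taken for granted. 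Stability plus the Fischer-Colbrie--Schoen theorem (Theorem~\ref{thm:S-FC}) then give positive solutions of $\Delta v_a=q(a)v_a$; the trace identity and parabolicity, via a positive superharmonic function built from the $\log v_{e_i}$, force $q\equiv 0$, and polarizing $q(a)=0$ in the normal variable yields the pointwise dichotomy $A^+|_x=0$ or $A^-|_x=0$ (Propositions~\ref{prop:Q-vanish} and~\ref{prop:Apm-vanish-local}), which is exactly the input your unique-continuation step requires but your sketch never delivers.

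A second, independent problem is your use of parabolicity. The capacity characterization gives cutoffs with $\int_\Sigma|\nabla\eta_j|^2\,\vol_\Sigma\to 0$; it does not give $\int_\Sigma|\nabla\eta_j|^2\,\phi\,\vol_\Sigma\to 0$ for an arbitrary (unbounded, merely integrable) weight $\phi$, and in any case $|A|^2$ is not assumed integrable here: the theorem deliberately assumes only parabolicity, not finite total curvature --- the latter is the stronger hypothesis of Micallef's Theorem~IV and of Fraser--Schoen, and assuming it would change the scope of the result. So even if your $Q$ were nonnegative, the proposed cutoff limit would need an integrability hypothesis you do not have. The logarithm/superharmonic-function mechanism of Proposition~\ref{prop:Q-vanish} is what lets the paper exploit parabolicity with no integral bound on $|A|^2$, and some such device is needed in any repair of your argument.
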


Our assumption that $N\Sigma$ admits a parallel complex structure compatible with the induced metric can alternatively be phrased as saying that the induced connection $D$ on $N\Sigma$ has holonomy contained in $\mathrm{U}(k)$. This holonomy condition can be viewed as a constraint on the curvature of $D$ (via the Ambrose--Singer theorem). In fact, Theorem~\ref{thm:main} can be equivalently reformulated as saying that if $\Sigma$ is complete, oriented, parabolic, and stable minimal, then $\Sigma$ is holomorphic with respect to some parallel orthogonal complex structure on $\RR^{2 + 2k}$ if and only if the holonomy of the normal bundle is contained in $\mathrm{U}(k)$. 

Note that our assumption in Theorem \ref{thm:main} on $N \Sigma$ is necessary for such a theorem to hold, because the restriction of $\overline{J}$ to $N \Sigma$ is a complex structure $J_N$ on $N\Sigma$ which is compatible with the induced metric and parallel with respect to the induced connection. Moreover, this assumption is automatically satisfied in the classical case $k=1$ because $N\Sigma$ is an $\RR^2$-bundle over $\Sigma$ with structure group $\mathrm{SO}(2) = \mathrm{U}(1)$, so the complex structure $J_N$ on $N \Sigma$ is parallel because it depends only on the orientation.

We make a few remarks on the hypotheses of Theorem~\ref{thm:main} on the one hand, and those of~\cite[Theorem IV]{Micallef} and~\cite[Theorem 1.1]{FS} on the other. In all three theorems, the list of assumptions begins with a stable minimal immersion into $\RR^n$ of an oriented, complete $2$-dimensional Riemannian manifold $(\Sigma, g)$. To this we add the parabolicity condition as in the $4$-dimensional theorem of Micallef,~\cite[Theorem I]{Micallef}, and also assume that the codimension is even, and that $N\Sigma$ admits a complex structure with the stated properties. On the other hand, both~\cite[Theorem IV]{Micallef} and~\cite[Theorem 1.1]{FS} assume that $(\Sigma, g)$ has finite total curvature, which is known to imply parabolicity, by a classical result of Chern--Osserman~\cite{CO} and the fact that $(\Sigma, g)$ is complete and minimally immersed. In addition,~\cite[Theorem 1.1]{FS} strengthens the stability requirement to covering stability. That said, neither~\cite[Theorem IV]{Micallef} nor~\cite[Theorem 1.1]{FS} assume a priori that the codimension is even, but rather deduce as a result that $\Sigma$ lies in an even-dimensional subspace (on which there is a parallel orthogonal complex structure that makes it holomorphic).

We prove Theorem~\ref{thm:main} in Section~\ref{sec:main}. We discuss possible further generalizations to other calibrations and to Smith maps in Section~\ref{sec:conclusion}. Unless stated otherwise, all manifolds are assumed to be connected.

\textbf{Acknowledgements.} The research of SK is partially supported by a Discovery Grant from NSERC.

\section{Proof of the main theorem} \label{sec:main}

\subsection{Preliminaries} \label{sec:prelim}

We work in slightly more generality than needed for the theorem, only specializing to the case where $\Sigma$ is a surface later. Let $(\Sigma^{2m}, g)$ be a complete oriented Riemannian manifold such that every positive function $h \colon \Sigma \to (0, \infty)$ satisfying $\Delta h \leq 0$ is constant. (When $\Sigma$ is $2$-dimensional, this condition is known as \emph{parabolicity}. See the beginning of \cite[Section 5]{Micallef} for a number of conditions which imply parabolicity.) 

Next we let $F \colon (\Sigma^{2m}, g) \to \RR^{2m + 2k}$ be an isometric immersion. Then the pullback bundle $F^*(T\RR^{2m + 2k})$ splits orthogonally into $T\Sigma \oplus N\Sigma$. Accordingly, the Levi-Civita connection of the standard metric, denoted $\overline{\nabla}$, induces metric-compatible connections on $T\Sigma$ and $N\Sigma$, denoted by $\nabla$ and $D$, respectively. That is, 
\[
\nabla v = \overline{\nabla}^T v, \text{ for all }v \in \Gamma(T\Sigma),
\]
\[
D s = \overline{\nabla}^\perp s, \text{ for all }s \in \Gamma(N\Sigma).
\]
On $T\Sigma$, we assume that there is a $\nabla$-parallel complex structure $J_\Sigma$ compatible with $g$, thus making $\Sigma$ a K\"ahler manifold. (Note that this is automatic if $m=1$.)

Regarding $N\Sigma$, we assume that there exists an orthogonal almost complex structure $J_N \in \Gamma(\Endo N\Sigma)$ which is $D$-parallel on $N\Sigma$. More specifically, we require that $J_N$ satisfy
\begin{enumerate}
\item[(i)] $\bangle{J_N \xi, J_N\eta} = \bangle{\xi, \eta}$ for all $x \in \Sigma$ and $\xi, \eta\in N_x\Sigma$.
\vskip 1mm
\item[(ii)] $J_N^2 = -\Id$.
\vskip 1mm
\item[(iii)] $D(J_Ns) = J_N(D s)$ for all $s \in \Gamma(N\Sigma)$.
\end{enumerate}

The immersion $F$ gives rise to the second fundamental form $A$ of $\Sigma$, which is a section of $\Sym^2 (T^*\Sigma)\otimes N\Sigma$. Our convention for the second fundamental form is
\[
A(v, w) = \overline{\nabla}_v^\perp w.
\]
Also, given $v, w \in T_x\Sigma$ and $s \in N_x\Sigma$, we write $A^s_{v,w} = A^s(v, w)$ for $\bangle{A(v, w), s}$. Note then that
\begin{equation} \label{eq:2ndff}
A^s(v, w) = \bangle{\overline{\nabla}_v^\perp w, s} = - \bangle{\overline{\nabla}_v^T s, w}.
\end{equation}
Throughout this section we assume that $F$ is a \emph{stable minimal} isometric immersion, which means the following. Recall that $F$ is said to be a minimal immersion if
\[
\tr_\Sigma A = \sum_{i = 1}^{2m}A(\tau_i, \tau_i) = 0 \text{ for all }x \in \Sigma
\]
(here $\tau_1, \cdots, \tau_{2m}$ is an orthonormal basis for $T_x\Sigma$), which is equivalent to $F$ being a critical point of the volume functional with respect to compactly supported variations. To define stability, we recall further that if $F$ is a minimal immersion, then the second variation at $\Sigma$ of the volume is given by the formula
\begin{equation}\label{eq:2nd-variation}
\delta^2 V(s, s): = \int_{\Sigma} \big( |D s|^2 - |A^s|^2 \big) \vol_\Sigma,
\end{equation}
defined for compactly supported sections $s$ of $N\Sigma$. (Here $|A^s|^2 = \sum_{i, j = 1}^{2m} |A^s_{\tau_i, \tau_j}|^2$.) When $F$ is a minimal immersion and 
\[
\delta^2 V(s, s) \geq 0 \text{ for all compactly supported } s \in \Gamma(N\Sigma),
\] 
we say that $F$ is \emph{stable}.

We end this section with a version of~\eqref{eq:2nd-variation} that involves a cut off function, which roughly speaking allows us to substitute $s$ which are not necessarily compactly supported into~\eqref{eq:2nd-variation}. The proof, included for the convenience of the reader, is a standard computation. (See for example~\cite[equations (5.7) and (5.8)]{Micallef}.)

\begin{lemm}\label{lemm:2nd-variation-with-cutoff}
Let $s \in \Gamma(N\Sigma)$ and $f \in C^\infty_c(\Sigma)$. Then 
\[
\delta^2 V(fs, fs) = \int_{\Sigma} \big( |\nabla f|^2 |s|^2 + f^2 \bangle{\cJ(s), s} \big) \vol_\Sigma.
\]
Recall that $\cJ$ denotes the Jacobi operator, given by
\begin{equation} \label{eq:Jacobi}
\cJ(s) = D^* D s - A^s_{\tau_i, \tau_j} A_{\tau_i, \tau_j},
\end{equation}
where $D^* D =- D^2_{\tau_i, \tau_i}$.
\end{lemm}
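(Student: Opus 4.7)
The plan is to start from the definition
\[
\delta^2 V(fs, fs) = \int_{\Sigma} \big( |D(fs)|^2 - |A^{fs}|^2 \big) \vol_\Sigma,
\]
expand each term using the Leibniz rule and the tensoriality of $A$ in the normal direction, and then integrate by parts to produce the Jacobi operator. Since $A^s$ is pointwise linear in $s$, the shape operator term is immediate: $A^{fs} = f A^s$, hence $|A^{fs}|^2 = f^2 |A^s|^2$.

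For the gradient term, writing $\tau_1,\dots,\tau_{2m}$ for a local orthonormal frame on $\Sigma$, the Leibniz rule gives $D_{\tau_i}(fs) = (\tau_i f)\, s + f\, D_{\tau_i} s$. Squaring and summing, and using metric-compatibility of $D$ in the form $\bangle{s, D_{\tau_i} s} = \tfrac{1}{2}\tau_i(|s|^2)$, I would obtain
\[
|D(fs)|^2 = |\nabla f|^2 |s|^2 + f\, \bangle{\nabla f, \nabla |s|^2} + f^2 |Ds|^2.
\]
The middle cross term can be rewritten as $\tfrac{1}{2}\bangle{\nabla f^2, \nabla |s|^2}$, and since $f$ has compact support, integration by parts yields
\[
\int_\Sigma f\, \bangle{\nabla f, \nabla |s|^2}\, \vol_\Sigma = -\int_\Sigma \tfrac{1}{2} f^2 \Delta |s|^2\, \vol_\Sigma.
\]

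Next, I would invoke the pointwise Bochner identity $\tfrac{1}{2}\Delta |s|^2 = |Ds|^2 - \bangle{D^*D s, s}$, which follows by computing $\nabla^2_{\tau_i,\tau_i} |s|^2$ in a frame normal at the point and recalling the definition $D^*D = -D^2_{\tau_i,\tau_i}$. Combining the above, all $|Ds|^2$ contributions cancel against $\tfrac{1}{2}\Delta |s|^2$ and leave
\[
\delta^2 V(fs, fs) = \int_\Sigma \Big( |\nabla f|^2 |s|^2 + f^2 \bangle{D^*Ds, s} - f^2 |A^s|^2 \Big) \vol_\Sigma.
\]
Recognizing $\bangle{D^*Ds, s} - |A^s|^2 = \bangle{\cJ(s), s}$ from the definition \eqref{eq:Jacobi} of the Jacobi operator completes the identity.

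There is no real obstacle beyond bookkeeping of signs and conventions; the only points that need care are the sign of the Laplacian (so that $\int \bangle{\nabla u, \nabla v} = -\int u \Delta v$ for $u$ compactly supported) and the sign convention $D^*D = -D^2_{\tau_i,\tau_i}$, both of which are fixed in the statement of the lemma.
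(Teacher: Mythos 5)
Your proposal is correct and follows essentially the same route as the paper's proof: expand $|D(fs)|^2$ via the Leibniz rule, integrate the cross term by parts, apply the Bochner-type identity $\tfrac{1}{2}\Delta|s|^2 = |Ds|^2 - \bangle{D^*Ds,s}$, and then identify $\bangle{D^*Ds,s} - |A^s|^2 = \bangle{\cJ(s),s}$ using tensoriality of $A^s$ in $s$. No issues.
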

\begin{proof}
We begin by noting that 
\[
\begin{split}
|D(fs)|^2 =\ & |(\nabla f)s + f D s|^2\\
=\ & |\nabla f|^2 |s|^2  + f^2 |D s|^2 + 2\bangle{(\nabla f)s, fD s}\\
=\ & |\nabla f|^2 |s|^2 + f^2 |D s|^2 + \frac{1}{2}\bangle{\nabla (f^2), \nabla |s|^2}.
\end{split}
\]
Integrating by parts over $\Sigma$ gives
\begin{equation*}
\begin{split}
\int_{\Sigma} |D(fs)|^2 \vol_{\Sigma} =\ & \int_{\Sigma} \big( |\nabla f|^2 |s|^2 + f^2 |D s|^2 + \frac{1}{2}\bangle{\nabla (f^2), \nabla |s|^2} \big) \vol_{\Sigma} \\
=\ & \int_{\Sigma} \big( |\nabla f|^2 |s|^2 + f^2 |D s|^2 - \frac{1}{2} f^2 \Delta |s|^2 \big) \vol_{\Sigma}.
\end{split}
\end{equation*}
Combining this with
\[
\Delta |s|^2 = 2|D s|^2 - 2\bangle{s, D^* D s},
\]
we find that 
\[
\int_{\Sigma} |D(fs)|^2 \vol_{\Sigma} = \int_{\Sigma} \big( |\nabla f|^2 |s|^2 + f^2 \bangle{s, D^* D s} \big) \vol_{\Sigma}.
\]
Therefore we have
\[
\begin{split}
\delta^2 V(fs, fs) =\ & \int_{\Sigma} \big( |D (fs)|^2 - |A^{fs}|^2 \big) \vol_{\Sigma}\\
=\ & \int_{\Sigma} \big( |\nabla f|^2 |s|^2 + f^2 \bangle{s, D^* D s} - f^2 |A^s|^2 \big) \vol_{\Sigma}\\
=\ & \int_{\Sigma} \Big( |\nabla f|^2 |s|^2 +  f^2 \big(  \bangle{s, D^* D s} - |A^s|^2 \big) \Big) \vol_{\Sigma}.
\end{split}
\]
Noting that 
\[
|A^s|^2 = A^s_{\tau_i, \tau_j}\bangle{A_{\tau_i, \tau_j}, s} = \bangle{s, A^s_{\tau_i, \tau_j}A_{\tau_i, \tau_j}}
\]
completes the proof.
\end{proof}

\subsection{Special variations}

Similar to~\cite[Theorem I]{Micallef}, in the course of proving Theorem~\ref{thm:main}, the variations $s$ which we substitute into the second variation formula in Lemma~\ref{lemm:2nd-variation-with-cutoff} belong to a particular family constructed out of parallel vector fields on $\RR^{2m + 2k}$.

\begin{lemm}\label{lemm:a-Jacobi}
Given $a \in \RR^{2m + 2k}$, denote by $a^\perp$ the section of $N\Sigma$ obtained by letting
\[
a^\perp|_x = \text{orthogonal projection of }a \text{ onto }N_x\Sigma.
\] 
Then we have
\[
\cJ(a^\perp) = 0.
\]
\end{lemm}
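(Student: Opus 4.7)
The plan is to exploit the fact that $a$, viewed as a constant vector field on $\RR^{2m+2k}$, is parallel with respect to $\overline{\nabla}$, and to translate this into a usable identity for $D a^\perp$. Decomposing $a = a^T + a^\perp$ along $\Sigma$, the vanishing of $\overline{\nabla}_v a$ for $v \in T\Sigma$ splits into its tangential and normal parts via Gauss--Weingarten. Using the convention $A(v, w) = \overline{\nabla}_v^\perp w$ together with~\eqref{eq:2ndff}, the normal component yields
\[
D_v a^\perp = -A(v, a^T) \quad \text{for all } v \in \Gamma(T\Sigma),
\]
while the tangential component yields $\bangle{\nabla_v a^T, w} = A^{a^\perp}(v, w)$ for all $w \in T\Sigma$, and hence $\nabla_v a^T = \sum_j A^{a^\perp}_{v, \tau_j}\, \tau_j$ in any orthonormal frame $\{\tau_j\}$ of $T\Sigma$.

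Next, I fix $x \in \Sigma$ and choose a local orthonormal frame $\{\tau_1, \ldots, \tau_{2m}\}$ of $T\Sigma$ that is parallel at $x$. Differentiating the first identity and using $\nabla_{\tau_i}\tau_j|_x = 0$, I find at $x$:
\[
D^*D a^\perp = -\sum_{i} D_{\tau_i} D_{\tau_i} a^\perp = \sum_{i} (D_{\tau_i} A)(\tau_i, a^T) + \sum_{i} A\big(\tau_i, \nabla_{\tau_i} a^T\big).
\]

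The first sum vanishes by Codazzi together with minimality: since the ambient space is flat, $(D_X A)(Y, Z)$ is totally symmetric in $X, Y, Z$, and so $(D_{\tau_i}A)(\tau_i, a^T) = (D_{a^T}A)(\tau_i, \tau_i)$; summing over $i$ at $x$ recovers $D_{a^T}(\tr_\Sigma A)|_x$, which is zero because $\tr_\Sigma A \equiv 0$. For the second sum, the tangential identity from the first paragraph gives $\nabla_{\tau_i} a^T = \sum_j A^{a^\perp}_{\tau_i, \tau_j}\, \tau_j$, hence
\[
\sum_i A\big(\tau_i, \nabla_{\tau_i} a^T\big) = \sum_{i, j} A^{a^\perp}_{\tau_i, \tau_j}\, A_{\tau_i, \tau_j}.
\]
This is precisely the curvature term in~\eqref{eq:Jacobi} applied to $s = a^\perp$, so $\cJ(a^\perp)|_x = D^*D a^\perp|_x - \sum_{i,j} A^{a^\perp}_{\tau_i, \tau_j} A_{\tau_i, \tau_j}|_x = 0$. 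Since $x$ was arbitrary, the lemma follows. The one substantive step is the cancellation of $\sum_i (D_{\tau_i}A)(\tau_i, a^T)$, which requires the combined use of the Codazzi equation, minimality, and a frame parallel at the base point; the rest is a routine unwinding of Gauss--Weingarten and the definition of $\cJ$.
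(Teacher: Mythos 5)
Your proof is correct and follows essentially the same route as the paper's: differentiate the Gauss--Weingarten splitting of the constant field $a$, kill the $\sum_i (D_{\tau_i}A)(\tau_i, a^T)$ term via Codazzi symmetry plus minimality in a frame parallel at the base point, and identify the remaining term with the curvature term of $\cJ$. No gaps.
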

\begin{proof}
Given $x \in \Sigma$, let $\tau_1, \cdots, \tau_{2m}$ be a local orthonormal frame for $T\Sigma$ such that $\nabla\tau_i = 0$ at $x$. By the constancy of $a$ and the definition of the second fundamental form, we have
\begin{equation}\label{eq:a-first-derivative}
\begin{split}
D_{\tau_i} a^\perp =\ & \overline{\nabla}^\perp_{\tau_i} (a - a^T) = -\overline{\nabla}_{\tau_i}^\perp a^T = -A(\tau_i, a^T).
\end{split}
\end{equation}
Differentiating a second time yields, at $x$, that 
\begin{align} \nonumber 
D^2_{\tau_i, \tau_i} a^\perp =\ & D_{\tau_i} D_{\tau_i} a^\perp = -D_{\tau_i} (A(\tau_i, a^T)) \\ \label{eq:a-2nd-derivative}
=\ & - (D A)(\tau_i, \tau_i, a^T)  - A(\tau_i, \nabla_{\tau_i} a^T).
\end{align}
By the symmetry of the second fundamental form and the Codazzi equation (here we use that the ambient space is flat), we have
\begin{equation}\label{eq:Codazzi}
(D A)(\tau_i, \tau_i, a^T) = (DA)(\tau_i, a^T, \tau_i) = (D A)(a^T, \tau_i, \tau_i).
\end{equation}
At the point $x \in \Sigma$, using the minimality of $F$, we find that
\begin{equation}\label{eq:minimality-used}
(D A)(a^T, \tau_i, \tau_i) = D_{a^T}(A(\tau_i, \tau_i)) = 0.
\end{equation}
Turning to the second term of~\eqref{eq:a-2nd-derivative}, using the constancy of $a$ and~\eqref{eq:2ndff} we have
\[
\nabla_{\tau_i} a^T = \overline{\nabla}^T_{\tau_i} (a - a^\perp) = - \overline{\nabla}^T_{\tau_i} a^\perp = A^{a^\perp}(\tau_i).
\]
Using this and~\eqref{eq:Codazzi},~\eqref{eq:minimality-used} in~\eqref{eq:a-2nd-derivative}, we get
\[
\begin{split}
D^2_{\tau_i, \tau_i} a^\perp =\ & - A(\tau_i, A^{a^\perp}(\tau_i)) =  - A^{a^\perp}(\tau_i, \tau_j) A(\tau_i, \tau_j).
\end{split}
\]
Thus we conclude that
\begin{equation}\label{eq:a-laplace}
D^* D a^\perp = A^{a^\perp}(\tau_i, \tau_j)A(\tau_i, \tau_j),
\end{equation}
which by~\eqref{eq:Jacobi} means precisely that $\cJ(a^\perp) = 0$, as asserted.
\end{proof}

\begin{lemm}\label{lemm:Q-computation}
For $a \in \RR^{2m + 2k}$, with $a^\perp$ defined as in Lemma~\ref{lemm:a-Jacobi}, we have
\begin{equation}\label{eq:Q-formula}
q(a): = \bangle{\cJ(J_N a^\perp), J_Na^\perp} = |A^{a^\perp}|^2 - |A^{J_Na^\perp}|^2.
\end{equation}
Moreover, the trace of the quadratic form $q$ over $\RR^{2m + 2k}$ vanishes. That is, letting $e_1, \cdots, e_{2m + 2k}$ be any orthonormal basis of $\RR^{2m+2k}$, then
\begin{equation}\label{eq:Q-trace-free}
\sum_{i = 1}^{2m + 2k}q(e_i) = 0 \, \text{ on }\Sigma.
\end{equation}
\end{lemm}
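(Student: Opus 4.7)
\textbf{Plan for \eqref{eq:Q-formula}.} Expand the Jacobi operator on $J_N a^\perp$ using \eqref{eq:Jacobi}:
\[
\cJ(J_N a^\perp) = D^*D(J_N a^\perp) - A^{J_N a^\perp}_{\tau_i,\tau_j} A_{\tau_i,\tau_j}.
\]
The parallelism assumption (iii) on $J_N$ lets me commute $J_N$ past $D^*D$, so $D^*D(J_N a^\perp) = J_N\, D^*D(a^\perp)$, which by the key identity \eqref{eq:a-laplace} of Lemma~\ref{lemm:a-Jacobi} equals $J_N\big(A^{a^\perp}_{\tau_i,\tau_j} A_{\tau_i,\tau_j}\big)$. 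Now pair with $J_N a^\perp$ and use the orthogonality property (i) of $J_N$: the first pairing collapses to $A^{a^\perp}_{\tau_i,\tau_j}\bangle{A_{\tau_i,\tau_j}, a^\perp} = |A^{a^\perp}|^2$, and the second, by definition of $A^s$, equals $|A^{J_N a^\perp}|^2$. Subtracting gives \eqref{eq:Q-formula}.

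\textbf{Plan for \eqref{eq:Q-trace-free}.} By part (1), it suffices to show $\sum_i |A^{e_i^\perp}|^2 = \sum_i |A^{J_N e_i^\perp}|^2$ at each point $x \in \Sigma$. The trick is to expand both sums as
\[
\sum_i |A^{e_i^\perp}|^2 = \sum_{i,j,k} \bangle{A(\tau_j,\tau_k), e_i^\perp}^2 = \sum_{i,j,k}\bangle{A(\tau_j,\tau_k), e_i}^2,
\]
where the last step uses $A(\tau_j,\tau_k) \in N_x\Sigma$ so the projection does not affect the pairing. Reversing the order of summation and recognizing $\sum_i \bangle{A(\tau_j,\tau_k), e_i}^2 = |A(\tau_j,\tau_k)|^2$ (since $\{e_i\}$ is an orthonormal basis of $\RR^{2m+2k}$), one gets $\sum_i |A^{e_i^\perp}|^2 = |A|^2$, the full squared norm of $A$. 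For $\sum_i |A^{J_N e_i^\perp}|^2$, the same manipulation, combined with $\bangle{A(\tau_j,\tau_k), J_N e_i^\perp} = -\bangle{J_N A(\tau_j,\tau_k), e_i^\perp} = -\bangle{J_N A(\tau_j,\tau_k), e_i}$ and the isometry property of $J_N$, yields $\sum_i |J_N A(\tau_j,\tau_k)|^2 = |A|^2$ as well. The two sums agree.

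\textbf{Potential subtlety.} The only nontrivial point is that $J_N e_i^\perp$ need not form an orthonormal set in $N_x\Sigma$ (the projection $a \mapsto a^\perp$ is not an isometry of $\RR^{2m+2k}$), so one cannot directly treat $\{J_N e_i^\perp\}$ as a basis. The fix is the one above: push the $J_N$ off of $e_i^\perp$ onto $A(\tau_j,\tau_k)$, which is already normal, and only then invoke Parseval on the ambient basis $\{e_i\}$. Everything else is bookkeeping with \eqref{eq:a-laplace} and properties (i)--(iii) of $J_N$; no further curvature or minimality input is needed beyond what is already packaged into Lemma~\ref{lemm:a-Jacobi}.
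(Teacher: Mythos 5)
Your proof is correct. The first half, establishing \eqref{eq:Q-formula}, is essentially identical to the paper's argument: commute $J_N$ through $D^*D$ by parallelism, invoke \eqref{eq:a-laplace}, and pair with $J_N a^\perp$ using orthogonality. For the trace identity \eqref{eq:Q-trace-free} you take a mildly different route: the paper observes that the trace of the pointwise quadratic form $q$ is independent of the orthonormal basis of $\RR^{2m+2k}$, chooses an adapted basis at $x$ with $e_1,\dots,e_{2m}\in T_x\Sigma$ and $e_{2m+1},\dots,e_{2m+2k}\in N_x\Sigma$ (so only the normal vectors contribute), and then uses that $J_N$ is an isometry of $N_x\Sigma$ to match the two sums; you instead keep an arbitrary ambient basis, move $J_N$ off $e_i^\perp$ onto the normal-valued tensor $A$ via skew-adjointness, and apply Parseval to conclude that both sums equal $|A|^2$. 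The two arguments rest on the same key input (orthogonality of $J_N$ on the normal space), but yours avoids the basis-independence-of-trace step and yields the slightly stronger pointwise statement $\sum_i |A^{e_i^\perp}|^2 = \sum_i |A^{J_N e_i^\perp}|^2 = |A|^2$, while the paper's adapted-basis computation is a bit shorter. Your handling of the ``subtlety'' is also sound: projecting $e_i$ to $N_x\Sigma$ costs nothing when pairing against normal vectors, which is exactly why Parseval on the ambient basis applies.
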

\begin{proof}
By the assumption that $J_N$ is $D$-parallel and equation~\eqref{eq:a-laplace}, we have
\[
D^* D (J_N a^\perp) = J_N (D^* D a^\perp) = A^{a^\perp}_{ij}J_N A_{ij}.
\]
Therefore, 
\[
\cJ(J_Na^\perp) = A^{a^\perp}_{ij}J_NA_{ij} - A^{J_Na^\perp}_{ij}A_{ij},
\]
and hence using also the orthogonality of $J_N$ we obtain
\[
\begin{split}
\bangle{\cJ(J_N a^\perp), J_Na^\perp} =\ & A^{a^\perp}_{ij}  \bangle{J_N A_{ij}, J_N a^\perp} - A^{J_Na^\perp}_{ij}\bangle{A_{ij}, J_Na^\perp}\\
=\ & |A^{a^\perp}|^2 - |A^{J_N a^\perp}|^2.
\end{split}
\]
This proves~\eqref{eq:Q-formula}. To prove~\eqref{eq:Q-trace-free}, we fix $x \in \Sigma$ and note that the sum $\sum q(e_i)$ does not depend on the choice of orthonormal basis of $\RR^{2m + 2k}$, so we may assume that $e_{1}, \cdots ,e_{2m} \in T_x\Sigma$ and $e_{2m + 1}, \cdots, e_{2m + 2k} \in N_x\Sigma$. Then we have
\[
\sum_{i = 1}^{2m + 2k} q(e_i) = \sum_{j = 1}^{2k}q(e_{2m + j}) = \sum_{j = 1}^{2k} |A^{e_{2m+j}}|^2 - \sum_{j = 1}^{2k} |A^{J_Ne_{2m + j}}|^2.
\]
Since $J_N\big|_x \colon N_x \Sigma \to N_x\Sigma$ is orthogonal, we have
\[
\sum_{j = 1}^{2k} |A^{e_{2m+j}}|^2 = \sum_{j = 1}^{2k} |A^{J_Ne_{2m + j}}|^2,
\]
which gives~\eqref{eq:Q-trace-free}.
\end{proof}

\subsection{The infinitesimally holomorphic condition}\label{sec:inf-holo}

The quadratic form $q$ is closely related to the so-called infinitesimally holomorphic condition, which we now recall. For our purposes we need only take the ambient space to be $\RR^{2m + 2k}$, although the condition can be defined in more general settings. Since $F^*(T\RR^{2m + 2k}) = T\Sigma \oplus N\Sigma$, using the complex structures $J_\Sigma$ and $J_N$ on $T\Sigma$ and $N\Sigma$, respectively, we can build a complex structure $J$ on the vector bundle $F^*(T\RR^{2m + 2k})$ by defining 
\begin{equation} \label{eq:defn-J}
JX = J_\Sigma X^T + J_N X^\perp.
\end{equation}

\begin{prop}\label{prop:infinitesimally-holomorphic}
The following two conditions are equivalent:
\begin{enumerate}
\item[(a)] $\overline{\nabla} J = 0$, in the sense that $\overline{\nabla}(JX) = J(\overline{\nabla}X)$ for all $X \in \Gamma(T\Sigma \oplus N\Sigma)$.
\item[(b)] $A(J_\Sigma v, w) = J_N A(v, w)$, for all $x\in \Sigma$ and $v, w \in T_x \Sigma$.
\end{enumerate}
(The $m=k=1$ case of this is contained in~\cite[Proposition 3.4]{MW}.)
\end{prop}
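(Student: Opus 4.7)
The plan is to compute the difference $(\overline{\nabla}_v J)(X) = \overline{\nabla}_v(JX) - J(\overline{\nabla}_v X)$ directly from the Gauss and Weingarten formulas, separate the tangential and normal components, and identify what each component says. By $\CC$-linearity over $\Gamma(F^*T\RR^{2m+2k})$, it suffices to check this on $X = w \in \Gamma(T\Sigma)$ and $X = s \in \Gamma(N\Sigma)$ separately. For $v \in T\Sigma$, the computation splits cleanly because $\overline{\nabla} = \nabla + A$ on tangent sections and $\overline{\nabla} = D - A^\bullet$ on normal sections, and because both $\nabla J_\Sigma = 0$ and $DJ_N = 0$ by hypothesis.

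First I would take $X = w \in \Gamma(T\Sigma)$. The Gauss formula gives $\overline{\nabla}_v w = \nabla_v w + A(v,w)$ and, applied to $J_\Sigma w$, gives $\overline{\nabla}_v(J_\Sigma w) = J_\Sigma \nabla_v w + A(v, J_\Sigma w)$. Subtracting $J(\overline{\nabla}_v w) = J_\Sigma \nabla_v w + J_N A(v,w)$ yields the purely normal expression
\[
(\overline{\nabla}_v J)(w) = A(v, J_\Sigma w) - J_N A(v, w).
\]
Next I would take $X = s \in \Gamma(N\Sigma)$. The Weingarten relation (a reformulation of~\eqref{eq:2ndff}) gives $\overline{\nabla}_v s = D_v s - A^s(v)$, and similarly $\overline{\nabla}_v(J_N s) = J_N D_v s - A^{J_N s}(v)$. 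Subtracting $J(\overline{\nabla}_v s) = -J_\Sigma A^s(v) + J_N D_v s$ produces the purely tangential expression
\[
(\overline{\nabla}_v J)(s) = J_\Sigma A^s(v) - A^{J_N s}(v).
\]

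Thus $\overline{\nabla} J = 0$ is equivalent to the conjunction of the two identities
\[
\text{(I)} \quad A(v, J_\Sigma w) = J_N A(v,w), \qquad \text{(II)} \quad A^{J_N s}(v) = J_\Sigma A^s(v),
\]
for all tangent $v, w$ and normal $s$. Identity (I) is equivalent to (b) by the symmetry $A(v, J_\Sigma w) = A(J_\Sigma w, v)$. To finish, I would show that (I) and (II) are equivalent by pairing (II) with an arbitrary tangent vector $u$ and using $J_\Sigma, J_N$ skew-adjoint: $\langle A^{J_N s}(v), u\rangle = \langle A(v,u), J_N s\rangle = -\langle J_N A(v,u), s\rangle$, while $\langle J_\Sigma A^s(v), u\rangle = -A^s(v, J_\Sigma u) = -\langle A(v, J_\Sigma u), s\rangle$; since $s$ is arbitrary, (II) is the same as $A(v, J_\Sigma u) = J_N A(v,u)$, which is (I).

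There is no real obstacle here: the argument is a bookkeeping computation once one writes down Gauss, Weingarten, $\nabla J_\Sigma = 0$, and $DJ_N = 0$. The only mild subtlety is recognizing that the tangential statement (II) is not an independent condition but is dual, via the second fundamental form pairing, to the normal statement (I); this duality is what makes a single condition (b) encode the vanishing of $\overline{\nabla} J$ in both its tangent-valued and normal-valued components.
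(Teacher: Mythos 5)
Your proposal is correct and follows essentially the same route as the paper: split $X$ into tangent and normal sections, compute $\overline{\nabla}(JX)-J(\overline{\nabla}X)$ via the Gauss and Weingarten formulas together with $\nabla J_\Sigma = 0$ and $D J_N = 0$, and use the pairing $\bangle{A^{s}(v),u} = \bangle{A(v,u),s}$ with skew-adjointness of $J_\Sigma, J_N$ to see that the normal-section condition is dual to the tangent-section condition (b). The only cosmetic difference is that you phrase this as the equivalence of your identities (I) and (II), whereas the paper derives (II) from (b) in the course of proving (b)$\Rightarrow$(a); the content is identical.
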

\begin{proof}
First, let $v \in \Gamma(T\Sigma)$. Using the fact that $J_\Sigma$ is $\nabla$-parallel, we compute
\begin{equation}\label{eq:d-J-tangent}
\overline{\nabla}(Jv) = \overline{\nabla}(J_\Sigma v) = \overline{\nabla}^\perp(J_\Sigma v) + \nabla (J_\Sigma v) = A(J_\Sigma v, \cdot) + J_\Sigma \nabla v.
\end{equation}
We also have
\begin{equation}\label{eq:J-d-tangent}
J(\overline{\nabla} v) =  J_\Sigma (\overline{\nabla}^T v) + J_N(\overline{\nabla}^\perp v) = J_\Sigma \nabla v + J_N A(v, \cdot).
\end{equation}
Next, let $\xi \in \Gamma(N\Sigma)$. Similarly, using the fact that $J_N$ is $D$-parallel, we compute
\begin{equation}\label{eq:d-J-normal}
\overline{\nabla}(J\xi) = \overline{\nabla}(J_N \xi) = D(J_N\xi) + \overline{\nabla}^T (J_N \xi) = J_N D\xi - A^{J_N\xi}(\cdot).
\end{equation}
We also have
\begin{equation}\label{eq:J-d-normal}
J(\overline{\nabla} \xi) =  J_\Sigma (\overline{\nabla}^T \xi) + J_N(\overline{\nabla}^\perp \xi) = -J_\Sigma A^{\xi}(\cdot) + J_N D\xi.
\end{equation}
Comparing~\eqref{eq:d-J-tangent} and~\eqref{eq:J-d-tangent}, we see that (a) implies (b). To prove that converse, we first note that (b) implies, by~\eqref{eq:d-J-tangent} and~\eqref{eq:J-d-tangent}, that 
\[
\overline{\nabla}(Jv) = J(\overline{\nabla }v), \text{ for all }v \in \Gamma(T\Sigma)
\]
Next, we take the inner product of the condition in (b) with an arbitrary $\xi \in N_x\Sigma$ to see that (b) implies
\[
A^\xi(J_\Sigma v, w) = - A^{J_N\xi}(v, w).
\]
The left hand side can be rewritten as
\[
A^\xi(J_\Sigma v, w) = \bangle{J_\Sigma v, A^{\xi}(w)} = -\bangle{v, J_\Sigma A^{\xi}(w)},
\]
while for the right hand side we have
\[
- A^{J_N\xi}(v, w) = - \bangle{v, A^{J_N\xi}(w)}.
\]
Hence we obtain from the three above equations that $J_\Sigma A^{\xi}(\cdot) = A^{J_N\xi}(\cdot)$. From this and equations~\eqref{eq:d-J-normal} and~\eqref{eq:J-d-normal}, we deduce that
\[
\overline{\nabla}(J\xi) = J(\overline{\nabla}\xi), \text{ for all }\xi \in \Gamma(N\Sigma),
\]
completing the proof that (b) implies (a).
\end{proof}

\begin{defi}
Let $F \colon \Sigma^{2m} \to \RR^{2m+2k}$ be an isometric immersion, and suppose $T\Sigma$ and $N\Sigma$ are equipped with orthogonal complex structures which are parallel with respect to $\nabla$ and $D$, respectively. (In particular, this means that $(\Sigma, J_\Sigma)$ equipped with the induced metric is K\"ahler.) Define the orthogonal complex structure $J $ on $T\Sigma \oplus N\Sigma$ as in~\eqref{eq:defn-J}. We say that $F$ is \emph{$(J_\Sigma, J_N)$-infinitesimally holomorphic} if either of the two equivalent conditions of Proposition~\ref{prop:infinitesimally-holomorphic} hold. (When $m=k=1$, this condition is also sometimes called \emph{superminimal} or \emph{real isotropic}. See~\cite{Bryant}, for example.)
\end{defi}

\begin{rmk} \label{rmk:inf-hol-motivation}
Suppose that $\RR^{2m+2k}$ is equipped with a $\overline{\nabla}$-parallel orthogonal complex structure $\overline{J}$, and that the immersion $F \colon (\Sigma, J_\Sigma) \to (\RR^{2m+2k}, \overline{J})$ is holomorphic, in the sense that $F_* J_\Sigma = \overline{J} F_*$. Then, taking $J_N$ to be the restriction of $\overline{J}$ to $N \Sigma$, the second fundamental form $A$ of the immersion satisfies the $(J_\Sigma, J_N)$-infinitesimally holomorphic condition. That is, this is precisely the condition on $A$ which is necessary for there to exist a $\overline{\nabla}$-parallel orthogonal complex structure $\overline{J}$ on $\RR^{2m+2k}$ with respect to which the immersion is holomorphic.
\end{rmk}

\begin{rmk}\label{rmk:pm-holomorphic}
Another orthogonal complex structure on $F^* (T\RR^{2m + 2k}) = T\Sigma \oplus N\Sigma$ built from $J_\Sigma$ and $J_N$ is given by
\[
(J_-)X = J_\Sigma X^T - J_N X^\perp.
\]
There is an analogous characterization for $\overline{\nabla}J_- = 0$ in terms of the second fundamental form. That is, it is equivalent to the condition that
\[
A(J_\Sigma v, w) = - J_NA(v, w), \text{ for all }x \in \Sigma \text{ and }v, w \in T_x\Sigma.
\]
Recall that we \emph{chose} an orthogonal $D$-parallel complex structure $J_N$ on the normal bundle $N\Sigma$ as part of our data. The other orthogonal complex structure $J_-$ on $T\Sigma \oplus N\Sigma$ described here corresponds to replacing $J_N$ by $-J_N$, which is still orthogonal and $D$-parallel.
\end{rmk}

In order to relate the $(J_\Sigma, J_N)$-infinitesimally holomorphic condition with the quadratic form $q$ from~\eqref{eq:Q-formula}, we define 
\begin{equation} \label{eq:defn-Apm}
A^{\pm}(v, w) = \frac{1}{2}\big( A(v, w) \mp J_NA(J_\Sigma v, w) \big), \text{ for all }x \in \Sigma,\ v, w \in T_x\Sigma.
\end{equation}
It is easy to see that 
\begin{equation}\label{eq:Apm-intertwine}
A^\pm(J_\Sigma v, w) = \pm J_N A^{\pm}(v, w).
\end{equation}
The next lemma clarifies the relationship between $A^{\pm}$ and $q$.

\begin{lemm}\label{lemm:Q-Apm}
We have 
\begin{equation}\label{eq:Q-Apm-1}
q(a) = 4\sum_{i, j = 1}^{2m}\bangle{A^+_{\tau_i, \tau_j}, a^\perp}\bangle{A^-_{\tau_i, \tau_j}, a^\perp}.
\end{equation}
Moreover, if we choose an orthonormal basis for $T_x\Sigma$ of the form $v_{1}, J_\Sigma v_1, \cdots, v_m, J_\Sigma v_m$, then 
\begin{equation}\label{eq:Q-Apm-2}
\begin{split}
q(a) =\ & 4\Big[ \sum_{i, j = 1}^m\bangle{A^+_{v_i, v_j}, a^\perp}\bangle{A^-_{v_i, v_j}, a^\perp} - \sum_{i, j = 1}^m \bangle{A^+_{v_i, v_j}, J_Na^\perp}\bangle{A^-_{ v_i, v_j}, J_Na^\perp}\Big]\\
& + 4 \Big[ \sum_{i, j = 1}^m \bangle{A^+_{ v_i, J_\Sigma v_j}, a^\perp}\bangle{A^-_{ v_i, J_\Sigma v_j}, a^\perp} - \sum_{i, j = 1}^m \bangle{A^+_{v_i, J_\Sigma v_j}, J_Na^\perp}\bangle{A^-_{ v_i, J_\Sigma v_j}, J_Na^\perp} \Big].
\end{split}
\end{equation}
\end{lemm}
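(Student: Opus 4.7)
The plan is to prove both identities by direct algebraic manipulation, starting from the definitions~\eqref{eq:Q-formula} of $q$ and~\eqref{eq:defn-Apm} of $A^{\pm}$, and using the intertwining relation~\eqref{eq:Apm-intertwine} together with the orthogonality of $J_N$.

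For~\eqref{eq:Q-Apm-1}, I would fix $x \in \Sigma$ with an orthonormal basis $\tau_1, \ldots, \tau_{2m}$ of $T_x\Sigma$ and introduce the scalars
\[
\alpha_{ij} = \bangle{A_{\tau_i, \tau_j}, a^\perp}, \qquad \beta_{ij} = \bangle{J_N A_{J_\Sigma \tau_i, \tau_j}, a^\perp}.
\]
Since $J_N$ is orthogonal with $J_N^2 = -\Id$, one has $\bangle{J_N \xi, \eta} = -\bangle{\xi, J_N\eta}$, so~\eqref{eq:defn-Apm} gives $\bangle{A^+_{\tau_i, \tau_j}, a^\perp} = \tfrac{1}{2}(\alpha_{ij} - \beta_{ij})$ and $\bangle{A^-_{\tau_i, \tau_j}, a^\perp} = \tfrac{1}{2}(\alpha_{ij} + \beta_{ij})$, hence $4\bangle{A^+_{\tau_i, \tau_j}, a^\perp}\bangle{A^-_{\tau_i, \tau_j}, a^\perp} = \alpha_{ij}^2 - \beta_{ij}^2$. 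Summing over $i, j$, the first piece is precisely $|A^{a^\perp}|^2$. For the second, the antisymmetry identity rewrites $\beta_{ij}^2 = \bangle{A_{J_\Sigma \tau_i, \tau_j}, J_N a^\perp}^2$; since $\{J_\Sigma \tau_i\}$ is again an orthonormal basis of $T_x\Sigma$, reindexing gives $\sum_{i,j}\beta_{ij}^2 = |A^{J_N a^\perp}|^2$. Comparing with~\eqref{eq:Q-formula} yields~\eqref{eq:Q-Apm-1}.

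For~\eqref{eq:Q-Apm-2}, with the K\"ahler-adapted basis $v_1, J_\Sigma v_1, \ldots, v_m, J_\Sigma v_m$, I would split the sum $\sum_{i,j=1}^{2m}$ in~\eqref{eq:Q-Apm-1} into four subsums corresponding to the four choices $(\tau_i, \tau_j) \in \{(v_i, v_j), (J_\Sigma v_i, v_j), (v_i, J_\Sigma v_j), (J_\Sigma v_i, J_\Sigma v_j)\}$. In the two subsums with $J_\Sigma$ in the first slot I would apply~\eqref{eq:Apm-intertwine} to rewrite $A^{\pm}_{J_\Sigma v_i, \cdot} = \pm J_N A^{\pm}_{v_i, \cdot}$, and then transfer $J_N$ off the bundle factor via the antisymmetry identity above. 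The opposite signs of $\pm J_N$ for $A^{+}$ versus $A^{-}$ combine so that each such subsum acquires an overall factor $-1$ together with the argument $J_N a^\perp$ in place of $a^\perp$, while any $J_\Sigma$ in the second slot is simply left in place. Assembling the four contributions and factoring out the $4$ delivers~\eqref{eq:Q-Apm-2}.

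The main obstacle is purely combinatorial sign bookkeeping: the intertwining relation~\eqref{eq:Apm-intertwine} must be applied carefully in each slot of each of the four subsums, and one has to keep straight the interaction of the two independent sign conventions (the $\pm$ indexing $A^{\pm}$ and the $\pm J_N$ produced by~\eqref{eq:Apm-intertwine}). There is no substantive conceptual difficulty beyond this sign tracking.
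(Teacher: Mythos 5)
Your proposal is correct and follows essentially the same route as the paper: for~\eqref{eq:Q-Apm-1} you expand $4\bangle{A^+_{\tau_i,\tau_j},a^\perp}\bangle{A^-_{\tau_i,\tau_j},a^\perp}$ as a difference of squares and reindex over the orthonormal basis $\{J_\Sigma\tau_i\}$, which is the paper's computation read in reverse, and for~\eqref{eq:Q-Apm-2} you perform exactly the paper's four-way splitting and apply~\eqref{eq:Apm-intertwine} with the skewness of $J_N$ to the two subsums with $J_\Sigma$ in the first slot. No gaps; the sign bookkeeping you describe works out as claimed.
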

\begin{proof}
We begin by noting that, since $J_\Sigma$ is orthogonal, we have
\[
|A^{J_Na^\perp}|^2 = \sum_{i, j=1}^{2m}\bangle{A_{\tau_i, \tau_j}, J_Na^\perp}^2 = \sum_{i, j=1}^{2m}\bangle{J_NA_{ \tau_i, \tau_j}, a^\perp}^2 = \sum_{i, j=1}^{2m}\bangle{J_NA_{J_\Sigma \tau_i, \tau_j}, a^\perp}^2.
\]
Using this and~\eqref{eq:Q-formula}, we compute 
\[
\begin{split}
q(a) =\ & |A^{a^\perp}|^2 - |A^{J_Na^\perp}|^2\\
=\ & \sum_{i, j}\bangle{A_{\tau_i, \tau_j}, a^\perp}^2 - \sum_{i, j}\bangle{J_NA_{J_\Sigma \tau_i, \tau_j}, a^\perp}^2\\
=\  & \sum_{i, j} \bangle{A_{\tau_i, \tau_j} -J_NA_{J_\Sigma \tau_i, \tau_j}, a^\perp }\bangle{A_{\tau_i, \tau_j} +J_NA_{J_\Sigma \tau_i, \tau_j}, a^\perp },
\end{split}
\]
which proves~\eqref{eq:Q-Apm-1}. To deduce~\eqref{eq:Q-Apm-2} when the orthonormal basis of $T_x\Sigma$ has the form described in the statement, we first split the right hand side of~\eqref{eq:Q-Apm-1} to get
\begin{equation}\label{eq:Q-Apm-split}
\begin{split}
\frac{q(a)}{4} = \ & \sum_{i, j = 1}^m\bangle{A^+_{v_i, v_j}, a^\perp}\bangle{A^-_{v_i, v_j}, a^\perp} + \sum_{i, j = 1}^m \bangle{A^+_{J_\Sigma v_i, v_j}, a^\perp}\bangle{A^-_{J_\Sigma v_i, v_j}, a^\perp}\\
& + \sum_{i, j = 1}^m \bangle{A^+_{ v_i, J_\Sigma v_j}, a^\perp}\bangle{A^-_{v_i, J_\Sigma v_j}, a^\perp} + \sum_{i, j = 1}^m \bangle{A^+_{J_\Sigma v_i, J_\Sigma v_j}, a^\perp}\bangle{A^-_{J_\Sigma v_i, J_\Sigma v_j}, a^\perp}.
\end{split}
\end{equation}
Now we use~\eqref{eq:Apm-intertwine} to rewrite the second and fourth summations in~\eqref{eq:Q-Apm-split}:
\[
\begin{split}
 \sum_{i, j = 1}^m \bangle{A^+_{J_\Sigma v_i, v_j}, a^\perp}\bangle{A^-_{J_\Sigma v_i, v_j}, a^\perp} =\ & \sum_{i, j = 1}^m \bangle{J_NA^+_{v_i, v_j}, a^\perp}\bangle{-J_NA^-_{v_i, v_j}, a^\perp}\\
 =\ & -\sum_{i, j = 1}^m \bangle{A^+_{v_i, v_j}, J_Na^\perp}\bangle{A^-_{v_i, v_j}, J_Na^\perp},
 \end{split}
\]
\[
\begin{split}
\sum_{i, j = 1}^m \bangle{A^+_{J_\Sigma v_i, J_\Sigma v_j}, a^\perp}\bangle{A^-_{J_\Sigma v_i, J_\Sigma v_j}, a^\perp} =\ & \sum_{i, j = 1}^{m}\bangle{J_N A^+_{v_i, J_\Sigma v_j}, a^\perp}\bangle{-J_N A^-_{v_i, J_{\Sigma}v_j}, a^\perp}\\
=\ &-\sum_{i, j = 1}^m \bangle{A^+_{v_i, J_\Sigma v_j}, J_Na^\perp}\bangle{A^-_{v_i, J_\Sigma v_j}, J_Na^\perp}.
\end{split}
\]
Rewriting the second and fourth terms in~\eqref{eq:Q-Apm-split} as indicated above yields~\eqref{eq:Q-Apm-2}.
\end{proof}

In the two following lemmas we specialize to the case where $m = 1$, so that $\Sigma$ is a surface, and establish some additional properties of $A^{\pm}$ for use in the final section where we prove Theorem~\ref{thm:main}.

\begin{lemm}\label{lemm:Apm-sym-tr}
Assume that $m = 1$, so that $\Sigma$ is a surface. Then $A^{\pm}$ are both symmetric and trace-free.
\end{lemm}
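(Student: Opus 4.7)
The plan is to reduce both assertions to linear-algebraic facts at a point, using only two properties of $A$: that $A$ is symmetric (automatic from the definition of the second fundamental form) and that $A$ is trace-free (since $F$ is minimal).

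For each $x \in \Sigma$ and $s \in N_x\Sigma$, introduce the shape operator $A^s \colon T_x\Sigma \to T_x\Sigma$ by $\bangle{A^s(v), w} = A^s_{v, w}$. The symmetry of $A$ makes each $A^s$ self-adjoint, and minimality gives $\tr A^s = 0$.

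To show symmetry of $A^\pm$, the first summand $A(v,w)$ in~\eqref{eq:defn-Apm} is already symmetric in $v, w$, so it suffices to show that $B(v,w) := A(J_\Sigma v, w)$ is symmetric in $v, w$. Taking inner product with an arbitrary $s \in N_x\Sigma$, this is equivalent to $\bangle{A^s J_\Sigma v, w} = \bangle{A^s J_\Sigma w, v}$, i.e., $A^s \circ J_\Sigma$ being self-adjoint. Since $A^s$ is self-adjoint and $J_\Sigma$ is skew-adjoint, this is in turn equivalent to the anticommutation $A^s J_\Sigma + J_\Sigma A^s = 0$. In dimension $2$, a direct $2 \times 2$ matrix computation in any orthonormal basis of the form $\{e, J_\Sigma e\}$ shows that every trace-free self-adjoint operator anticommutes with $J_\Sigma$; the minimality hypothesis $\tr A^s = 0$ then forces the anticommutation, so $A^\pm$ is symmetric.

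For the trace-free property, pick an orthonormal basis $\tau_1, \tau_2$ of $T_x\Sigma$ and compute
\[
\sum_{i = 1}^{2} A^\pm(\tau_i, \tau_i) = \frac{1}{2}\sum_{i=1}^{2} A(\tau_i, \tau_i) \mp \frac{1}{2} J_N \sum_{i=1}^{2} A(J_\Sigma \tau_i, \tau_i).
\]
The first sum vanishes by minimality. For the second, pair with an arbitrary $s \in N_x\Sigma$:
\[
\sum_{i=1}^{2} \bangle{A(J_\Sigma \tau_i, \tau_i), s} = \sum_{i=1}^{2} \bangle{A^s(J_\Sigma \tau_i), \tau_i} = \tr(A^s \circ J_\Sigma) = 0,
\]
since the trace of the product of a self-adjoint and a skew-adjoint operator always vanishes. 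Since $s \in N_x\Sigma$ is arbitrary, $\sum_i A(J_\Sigma \tau_i, \tau_i) = 0$, completing the proof.

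I do not anticipate any genuine obstacle; the only substantive observation is that minimality in dimension $2$ is precisely what gives the pointwise anticommutation $A^s J_\Sigma + J_\Sigma A^s = 0$, which in turn is exactly what is needed to turn the raw formula~\eqref{eq:defn-Apm} into a symmetric tensor.
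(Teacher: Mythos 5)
Your argument is correct and takes essentially the same route as the paper: both proofs are pointwise two-dimensional linear algebra showing that precomposing $A$ with $J_\Sigma$ exchanges the properties ``symmetric'' and ``trace-free'', so minimality gives symmetry of $A^{\pm}$ and symmetry of $A$ gives trace-freeness. The paper records this via an explicit $2\times 2$ matrix with entries in $N_x\Sigma$, while you phrase it through the shape operators $A^s$ (anticommutation of trace-free self-adjoint operators with $J_\Sigma$, and vanishing of $\tr(A^s\circ J_\Sigma)$), which is only a presentational difference.
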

\begin{proof}
Fix $x \in \Sigma$ and choose an orthonormal basis $\tau_1, \tau_2$ for $T_x\Sigma$ so that 
\[
\tau_2 = J_\Sigma\tau_1.
\]
We may then think of $A$ as a $2\times 2$ matrix with entries in $N_x\Sigma$:
\[
\left(
\begin{array}{cc}
A(\tau_1, \tau_1) & A(\tau_1, \tau_2)\\
A(\tau_2, \tau_1) & A(\tau_2, \tau_2)
\end{array}
\right).
\]
Letting $\widetilde{A}(v, w) = A(J_\Sigma v, w)$ for $v, w \in T_x\Sigma$, we observe that
\[
\left(
\begin{array}{cc}
\widetilde{A}(\tau_1, \tau_1) & \widetilde{A}(\tau_1, \tau_2)\\
\widetilde{A}(\tau_2, \tau_1) & \widetilde{A}(\tau_2, \tau_2)
\end{array}
\right) = 
\left(
\begin{array}{cc}
A(\tau_2, \tau_1) & A(\tau_2, \tau_2)\\
-A(\tau_1, \tau_1) & -A(\tau_1, \tau_2)
\end{array}
\right).
\]
In particular, $\widetilde{A}$ is trace-free if and only if $A$ is symmetric, and $\widetilde{A}$ is symmetric if and only if $A$ is trace-free. As $A$ has both properties, so does $\widetilde{A}$, and hence by~\eqref{eq:defn-Apm} so does $A^{\pm}$. 
\end{proof}

Using Lemma~\ref{lemm:Apm-sym-tr}, we can further simplify~\eqref{eq:Q-Apm-2}.

\begin{lemm}\label{lemm:Q-surface}
Suppose $m = 1$, so that $\Sigma$ is a surface. Then for all $x \in \Sigma$ and unit vector $\tau \in T_x\Sigma$, we have 
\begin{equation}\label{eq:Q-simplified}
q(a) = 8\bangle{A^+_{\tau, \tau}, a^\perp}\bangle{A^-_{\tau, \tau}, a^\perp}  - 8 \bangle{A^+_{\tau, \tau}, J_Na^\perp}\bangle{A^-_{\tau, \tau}, J_Na^\perp}.
\end{equation}
\end{lemm}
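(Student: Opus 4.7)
The plan is to specialize equation~\eqref{eq:Q-Apm-2} from Lemma~\ref{lemm:Q-Apm} to $m = 1$ and collapse the four sums into a single term using the symmetries of $A^\pm$. With $m=1$ and $v_1 = \tau$, each of the four sums in~\eqref{eq:Q-Apm-2} has exactly one summand, so it suffices to relate $\langle A^\pm_{\tau, J_\Sigma \tau}, a^\perp\rangle$ and $\langle A^\pm_{\tau, J_\Sigma \tau}, J_N a^\perp\rangle$ to the analogous expressions involving $A^\pm_{\tau,\tau}$.

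First, I would combine the symmetry of $A^\pm$ (Lemma~\ref{lemm:Apm-sym-tr}) with the intertwining relation~\eqref{eq:Apm-intertwine} to obtain the key identity
\[
A^\pm_{\tau, J_\Sigma \tau} \;=\; A^\pm_{J_\Sigma \tau, \tau} \;=\; \pm J_N A^\pm_{\tau, \tau}.
\]
(The trace-free property from Lemma~\ref{lemm:Apm-sym-tr} is automatically reflected in this identity together with $A^\pm_{J_\Sigma \tau, J_\Sigma \tau} = \pm J_N A^\pm_{J_\Sigma \tau, \tau} = -A^\pm_{\tau,\tau}$, but only the off-diagonal identity is needed here.)

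Next, I would substitute this into the second bracket of~\eqref{eq:Q-Apm-2} and simplify using the two basic identities $\langle J_N u, w\rangle = -\langle u, J_N w\rangle$ and $\langle J_N u, J_N w\rangle = \langle u, w\rangle$, which follow from $J_N$ being orthogonal with $J_N^2 = -\Id$. Explicitly,
\[
\langle A^+_{\tau,J_\Sigma\tau},a^\perp\rangle\,\langle A^-_{\tau,J_\Sigma\tau},a^\perp\rangle \;=\; \langle J_N A^+_{\tau,\tau},a^\perp\rangle\,\langle -J_N A^-_{\tau,\tau},a^\perp\rangle \;=\; -\langle A^+_{\tau,\tau},J_N a^\perp\rangle\,\langle A^-_{\tau,\tau},J_N a^\perp\rangle,
\]
and similarly
\[
\langle A^+_{\tau,J_\Sigma\tau},J_N a^\perp\rangle\,\langle A^-_{\tau,J_\Sigma\tau},J_N a^\perp\rangle \;=\; -\langle A^+_{\tau,\tau},a^\perp\rangle\,\langle A^-_{\tau,\tau},a^\perp\rangle.
\]
Thus the second bracket of~\eqref{eq:Q-Apm-2} contributes exactly the same as the first bracket, doubling it and producing the factor of $8$ in~\eqref{eq:Q-simplified}.

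There is no real obstacle; the proof is just bookkeeping with signs coming from the two signs $\pm$ in~\eqref{eq:Apm-intertwine} and the skew-adjointness of $J_N$. The only point to keep track of is that the two signs interact so that the combined contribution is nonzero rather than cancelling—this is guaranteed because the two off-diagonal terms $A^+_{\tau,J_\Sigma\tau}$ and $A^-_{\tau,J_\Sigma\tau}$ pick up opposite signs from $\pm J_N$, and the inner product of their transforms then reproduces the first bracket with the same sign.
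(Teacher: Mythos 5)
Your proposal is correct and follows essentially the same route as the paper: specialize~\eqref{eq:Q-Apm-2} to $m=1$, use the symmetry of $A^{\pm}$ from Lemma~\ref{lemm:Apm-sym-tr} together with~\eqref{eq:Apm-intertwine} to get $A^{\pm}_{\tau, J_\Sigma\tau} = \pm J_N A^{\pm}_{\tau,\tau}$, and then the skew-adjointness and orthogonality of $J_N$ show the second bracket equals the first, giving the factor of $8$. The sign bookkeeping in your two displayed identities matches the paper's computation exactly.
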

\begin{proof}
Since $m = 1$, each of the four summations in~\eqref{eq:Q-Apm-2} now involves only one single term, and we have
\begin{equation}\label{eq:Q-surface-split}
\begin{split}
\frac{q(a)}{4} =\ & \bangle{A^+_{\tau, \tau}, a^\perp}\bangle{A^-_{\tau, \tau}, a^\perp}  -\bangle{A^+_{\tau, \tau}, J_N a^\perp} \bangle{A^-_{\tau, \tau}, J_N a^\perp}\\
&+ \bangle{A^+_{\tau, J_\Sigma\tau}, a^\perp}\bangle{A^-_{\tau, J_\Sigma\tau}, a^\perp} - \bangle{A^+_{\tau, J_{\Sigma}\tau}, J_Na^\perp}\bangle{A^-_{\tau, J_{\Sigma}\tau}, J_N a^\perp}.
\end{split}
\end{equation}
Using the symmetry of $A^{\pm}$ given by Lemma~\ref{lemm:Apm-sym-tr}, in the third and fourth terms we can switch the order of the arguments in $A^\pm$ and then rewrite the terms as in the proof of Lemma~\ref{lemm:Q-Apm}. Specifically, we have
\[
\begin{split}
\bangle{A^+_{\tau, J_\Sigma\tau}, a^\perp}\bangle{A^-_{\tau, J_\Sigma\tau}, a^\perp} =\ &\bangle{A^+_{J_\Sigma\tau, \tau}, a^\perp}\bangle{A^-_{J_\Sigma\tau, \tau}, a^\perp}\\
=\ & -\bangle{J_NA^+_{\tau, \tau}, a^\perp}\bangle{J_NA^-_{\tau, \tau}, a^\perp}\\
=\ & -\bangle{A^+_{\tau, \tau}, J_N a^\perp}\bangle{A^-_{\tau, \tau}, J_N a^\perp},
\end{split}
\]
and similarly
\[
\begin{split}
\bangle{A^+_{\tau, J_{\Sigma}\tau}, J_Na^\perp}\bangle{A^-_{\tau, J_{\Sigma}\tau}, J_N a^\perp} =\ & \bangle{A^+_{J_{\Sigma}\tau, \tau}, J_N a^\perp}\bangle{A^-_{J_{\Sigma}\tau, \tau}, J_Na^\perp}
\\
=\ & -\bangle{J_NA^+_{\tau, \tau}, J_Na^\perp}\bangle{J_NA^{-}_{\tau, \tau}, J_Na^\perp}\\
=\ & -\bangle{A^+_{\tau, \tau}, a^\perp}\bangle{A^-_{\tau, \tau}, a^\perp}.
\end{split}
\]
Substituting these back into~\eqref{eq:Q-surface-split} gives
\[
\frac{q(a)}{4} = 2\bangle{A^+_{\tau, \tau}, a^\perp}\bangle{A^-_{\tau, \tau}, a^\perp} - 2\bangle{A^+_{\tau, \tau}, J_N a^\perp} \bangle{A^-_{\tau, \tau}, J_N a^\perp}. \qedhere
\]
\end{proof}

\subsection{A vanishing result}

In this section we return to general dimensions and prove a vanishing result for the quadratic form $q$ on $\RR^{2m + 2k}$ defined in Lemma~\ref{lemm:Q-computation}. This is where we use the assumptions that $F$ is stable and that $\Sigma$ does not support any non-constant superharmonic functions which are positive (recall that this assumption was called \emph{parabolicity}). The arguments in this section are essentially the same as those found in the work of Micallef~\cite{Micallef}, more precisely the proof of~\cite[Theorem I in Section 5]{Micallef}, up to some minor simplification. In particular, it turns out that we do not need to first derive a complexified version of the second variation formula (see~\cite[equation (2.2)]{Micallef}).

We will need to twice use the following celebrated result of Fischer-Colbrie and Schoen, which we state here in an equivalent formulation suitable for our purposes.

\begin{thm}[{Fischer-Colbrie--Schoen~\cite[Theorem 1, implication (i) $\Rightarrow$ (iii)]{S-FC}}] \label{thm:S-FC}
Let $p$ be a smooth function on $\Sigma$. Suppose that for all $f \in C^\infty_c(\Sigma)$ we have
\[
\int_{\Sigma} \big( |\nabla f|^2 + f^2 p \big) \vol_\Sigma \geq 0.
\]
Then there exists a \emph{positive} function $h$ on $\Sigma$ such that $\Delta h = p h$.
\end{thm}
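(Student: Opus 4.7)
The plan is to construct $h$ as a locally uniform limit of positive solutions to Dirichlet problems on an exhaustion of $\Sigma$, combining variational eigenvalue bounds from the hypothesis with the Harnack inequality for the operator $L := -\Delta + p$.

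First I would fix an exhaustion $\Omega_1 \Subset \Omega_2 \Subset \cdots$ of $\Sigma$ by smooth relatively compact open sets with $\bigcup_k \Omega_k = \Sigma$, and denote by $\mu_k$ the first Dirichlet eigenvalue of $L$ on $\Omega_k$, namely
\[
\mu_k = \inf \bigg\{ \int_{\Omega_k} \big( |\nabla f|^2 + p f^2 \big) \vol_\Sigma : f \in H_0^1(\Omega_k),\ \int_{\Omega_k} f^2 \vol_\Sigma = 1 \bigg\}.
\]
Density of $C^\infty_c(\Omega_k)$ in $H_0^1(\Omega_k)$, combined with extension by zero, turns the hypothesis into $\mu_k \geq 0$ for every $k$. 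Strict domain monotonicity of the principal eigenvalue, via unique continuation applied to the first eigenfunction, then gives $\mu_k > \mu_{k+1} \geq 0$, hence $\mu_k > 0$ for every $k$. This strict positivity makes $L \colon H_0^1(\Omega_k) \to H^{-1}(\Omega_k)$ invertible and ensures that $L$ satisfies the weak and strong maximum principles on $\Omega_k$, so the Dirichlet problem $L v_k = 0$ in $\Omega_k$ with $v_k = 1$ on $\partial \Omega_k$ (solved by writing $v_k = \varphi + u_k$ with $\varphi$ a smooth cutoff equal to $1$ near $\partial \Omega_k$ and $u_k \in H_0^1(\Omega_k)$ solving $L u_k = -L \varphi$) admits a unique solution $v_k$, smooth and strictly positive in $\Omega_k$.

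Fixing a base point $x_0 \in \Omega_1$, normalize $h_k := v_k / v_k(x_0)$, so $h_k > 0$, $L h_k = 0$, and $h_k(x_0) = 1$. For any fixed compact set $K \subset \Sigma$ and every sufficiently large $k$, the Harnack inequality for $L$ gives uniform two-sided bounds $c(K) \leq h_k \leq C(K)$ on $K$, and Schauder estimates then provide uniform $C^{2,\alpha}_{\mathrm{loc}}$ bounds. A diagonal subsequence thus converges in $C^2_{\mathrm{loc}}$ to a smooth $h \geq 0$ satisfying $L h = 0$, i.e.\ $\Delta h = p h$ on $\Sigma$; since $h(x_0) = 1$, the strong maximum principle forces $h > 0$ everywhere, as required.

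The main obstacle is the strict positivity $\mu_k > 0$, which is needed both to solve the Dirichlet problem and to have a maximum principle, but which the hypothesis only delivers as $\mu_k \geq 0$. The argument above rescues this via strict monotonicity of principal eigenvalues under strict domain inclusion, which in turn rests on unique continuation for $L$. A cleaner workaround that avoids unique continuation altogether is to first replace $p$ by $p + \varepsilon$ (whose Rayleigh quotient is automatically at least $\varepsilon > 0$), run the entire construction to get a positive $h_\varepsilon$ with $\Delta h_\varepsilon = (p + \varepsilon) h_\varepsilon$, normalize at $x_0$, and then pass to the limit $\varepsilon \downarrow 0$ via a second Harnack extraction.
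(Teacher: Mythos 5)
The paper itself gives no proof of this statement—it is quoted from Fischer-Colbrie--Schoen—and your argument is essentially their original one: exhaust $\Sigma$ by relatively compact domains, upgrade $\mu_k \ge 0$ to $\mu_k > 0$ via strict domain monotonicity of the principal eigenvalue, solve the Dirichlet problems $Lv_k = 0$ with $v_k = 1$ on $\partial\Omega_k$ (positivity from the maximum principle available once $\mu_k>0$), normalize at a base point, and extract a limit by Harnack plus Schauder estimates. The only caveat is the implicit use of noncompactness of $\Sigma$ (needed for an exhaustion by proper subdomains with nonempty boundary and for the strict monotonicity step), which is likewise implicit in the cited statement and automatic in the paper's application, since a complete minimal surface in Euclidean space cannot be compact.
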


\begin{prop}\label{prop:Q-vanish}
With the quadratic form $q$ as defined in Lemma~\ref{lemm:Q-computation}, we have 
\[
q(a)  = 0 \, \text{ on }\Sigma, \text{ for all }a \in \RR^{2m + 2k}.
\] 
\end{prop}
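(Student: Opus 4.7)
The plan is to plug $s = J_N a^\perp$ into the stabilized second variation formula (Lemma~\ref{lemm:2nd-variation-with-cutoff}), apply Fischer-Colbrie--Schoen (Theorem~\ref{thm:S-FC}) to manufacture for each $a$ a positive function $h_a$ satisfying $\Delta h_a = q(a) h_a$, and then exploit the trace-free identity~\eqref{eq:Q-trace-free} together with parabolicity to force these solutions to be constant, which in turn will force $q$ to vanish.

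More precisely, fix a unit vector $a \in \RR^{2m + 2k}$. Since $J_N$ is orthogonal we have $|J_N a^\perp|^2 = |a^\perp|^2 \leq 1$, and $\bangle{\cJ(J_N a^\perp), J_N a^\perp} = q(a)$ by definition. Substituting $s = J_N a^\perp$ into Lemma~\ref{lemm:2nd-variation-with-cutoff} and using stability therefore gives, for every $f \in C^\infty_c(\Sigma)$,
\[
0 \leq \int_\Sigma \big( |\nabla f|^2 |a^\perp|^2 + f^2 q(a) \big) \vol_\Sigma \leq \int_\Sigma \big( |\nabla f|^2 + f^2 q(a) \big) \vol_\Sigma.
\]
Theorem~\ref{thm:S-FC} then produces a positive $h_a \in C^\infty(\Sigma)$ with $\Delta h_a = q(a) h_a$.

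Next, fix any orthonormal basis $e_1, \ldots, e_N$ of $\RR^{2m + 2k}$, where $N := 2m + 2k$, and set $h_i := h_{e_i}$. I will consider the geometric mean
\[
\phi := \prod_{i = 1}^N h_i^{1/N} > 0.
\]
Since $h_i > 0$, we have $\Delta \log h_i = q(e_i) - |\nabla \log h_i|^2$; summing and using the trace-free identity $\sum_i q(e_i) = 0$ yields
\[
\Delta \log \phi = -\tfrac{1}{N} \sum_{i=1}^N |\nabla \log h_i|^2,
\]
while Cauchy--Schwarz gives
\[
|\nabla \log \phi|^2 = \tfrac{1}{N^2} \Big| \sum_{i=1}^N \nabla \log h_i \Big|^2 \leq \tfrac{1}{N} \sum_{i=1}^N |\nabla \log h_i|^2.
\]
Combining these via $\Delta \phi = \phi \big( \Delta \log \phi + |\nabla \log \phi|^2 \big)$ shows $\Delta \phi \leq 0$, so $\phi$ is a positive superharmonic function. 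Parabolicity then forces $\phi$ to be constant; unwinding the inequalities, $\nabla \log h_i = 0$ for every $i$, so each $h_i$ is a positive constant and $q(e_i) = \Delta h_i / h_i = 0$. Since the basis was arbitrary, $q$ vanishes on every unit vector, hence $q \equiv 0$ by homogeneity.

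The step I expect to be the crux is the construction of the superharmonic $\phi$: the exponent $1/N$ in the geometric mean is chosen precisely so that the Cauchy--Schwarz loss in $|\nabla \log \phi|^2$ exactly cancels the negative term $\Delta \log \phi = -\tfrac{1}{N}\sum |\nabla \log h_i|^2$ coming from the trace-free identity. Any other convex combination, or replacement of the product by a sum, would fail to yield superharmonicity; one really needs both the trace-free property of $q$ from Lemma~\ref{lemm:Q-computation} and the positivity of the individual $h_i$ from Theorem~\ref{thm:S-FC} balanced in just this way.
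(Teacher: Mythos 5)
Your argument is correct. The first half coincides with the paper's: stability plus Lemma~\ref{lemm:2nd-variation-with-cutoff} with $s = J_N a^\perp$, the estimate $|a^\perp|\leq 1$, and Theorem~\ref{thm:S-FC} to produce, for each unit $a$, a positive solution of $\Delta h_a = q(a)h_a$ (the paper's $v_a$). Where you genuinely diverge is the second half. The paper takes $w_a = \log v_a$, tests $\Delta w_a = q(a) - |\nabla w_a|^2$ against $f^2$, integrates by parts, applies Cauchy--Schwarz with weights to get $\int_\Sigma \big(2|\nabla f|^2 + (q(a) - \tfrac12 |\nabla w_a|^2)f^2\big)\vol_\Sigma \geq 0$, sums over a basis using~\eqref{eq:Q-trace-free}, and then invokes Theorem~\ref{thm:S-FC} a \emph{second} time to manufacture the positive superharmonic function to which parabolicity is applied. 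You instead build that superharmonic function directly and pointwise: the geometric mean $\phi = \prod_i h_{e_i}^{1/N}$ satisfies $\Delta\phi = \phi\big(\Delta\log\phi + |\nabla\log\phi|^2\big) \leq 0$ because the trace-free identity kills the $q(e_i)$ terms in $\Delta\log\phi$ and Cauchy--Schwarz bounds $|\nabla\log\phi|^2$ by $\tfrac1N\sum_i|\nabla\log h_{e_i}|^2$; constancy of $\phi$ then forces $\sum_i|\nabla\log h_{e_i}|^2 = 0$ via the identity for $\Delta\log\phi$, so each $h_{e_i}$ is constant and $q(e_i)=0$, and an arbitrary unit vector can be completed to an orthonormal basis. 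This buys you a cleaner finish: no second application of Fischer-Colbrie--Schoen, no cutoff-function integration by parts in the second stage, and the role of the trace-free identity and of parabolicity is made transparent in a single pointwise computation. The paper's route, by contrast, stays entirely within the ``stability inequality $\Rightarrow$ positive solution'' formalism and needs only the crude weighted Cauchy--Schwarz step rather than the exact balancing of exponents your geometric mean requires; both are legitimate, and your version is, if anything, slightly more economical.
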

\begin{proof}
It suffices to prove that $q(a) = 0$ for all unit vectors $a \in \RR^{2m + 2k}$, so below we assume that $|a| = 1$. Combining the stability assumption, Lemma~\ref{lemm:2nd-variation-with-cutoff}, and Lemma~\ref{lemm:Q-computation}, we see that for all $f \in C^\infty_c(\Sigma)$ there holds
\begin{equation}
\begin{split}
0 \leq \ & \delta^2V(fJ_N a^\perp, fJ_N a^\perp)\\
=\ & \int_{\Sigma} \big( |\nabla f|^2 |a^\perp|^2 + f^2q(a) \big) \vol_\Sigma \leq \int_{\Sigma} \big( |\nabla f|^2 + f^2q(a) \big)\vol_\Sigma,
\end{split}
\end{equation}
where to obtain the last inequality we merely estimated $|a^\perp|$ from above by $1$. Since $\Sigma$ is complete, the fact that
\[
0 \leq \int_{\Sigma} \big( |\nabla f|^2 + f^2q(a) \big) \vol_\Sigma \, \text{ for all }f \in C^\infty_c (\Sigma)
\]
implies, by Theorem~\ref{thm:S-FC}, that there exists a positive solution $v_a$ on $\Sigma$ to 
\begin{equation}\label{eq:va-PDE}
\Delta v_a = q(a)v_a.
\end{equation}
Next, letting $w_a = \log v_a$, we compute
\begin{equation}\label{eq:wa-PDE}
\Delta w_a = \frac{\Delta v_a}{v_a} - |\nabla w_a|^2 = q(a) - |\nabla w_a|^2.
\end{equation}
Thus, for all $f \in C^\infty_c(\Sigma)$, we see that upon multiplying both sides by $f^2$, integrating by parts over $\Sigma$, and using the Cauchy-Schwarz inequality, we have
\[
\begin{split}
\int_{\Sigma} (q(a) - |\nabla w_a|^2)f^2 \vol_\Sigma =\ & \int_{\Sigma} f^2 \Delta w_a \vol_\Sigma\\
=\ & -2\int_{\Sigma} f \bangle{\nabla f, \nabla w_a} \vol_\Sigma\\
\geq\ & -\int_{\Sigma} \big( 2|\nabla f|^2 + \frac{1}{2} f^2 |\nabla w_a|^2 \big) \vol_\Sigma.
\end{split}
\]
Rearranging gives 
\begin{equation}
\int_{\Sigma} \big( 2|\nabla f|^2 + (q(a) - \frac{1}{2}|\nabla w_a|^2) f^2 \big) \vol_\Sigma \geq 0, \, \text{ for all } f\in C^\infty_c(\Sigma).
\end{equation}
At this point we take an orthonormal basis $e_1, \cdots, e_{2m + 2k}$ of $\RR^{2m + 2k}$, and sum the above inequality over $a = e_1, \cdots, e_{2m + 2k}$. From~\eqref{eq:Q-trace-free} we obtain
\[
\int_{\Sigma} \Big( (4m + 4k) |\nabla f|^2  - \frac{1}{2}\sum_{i = 1}^{2m + 2k} |\nabla w_{e_i}|^2 f^2 \Big) \vol_\Sigma \geq 0, \, \text{ for all }f \in C^\infty_c(\Sigma).
\]
By Theorem~\ref{thm:S-FC} once again, we get a positive function $h$ on $\Sigma$ such that
\begin{equation}\label{eq:h-PDE}
(4m  + 4k)\Delta h = -\Big(\frac{1}{2}\sum_{i = 1}^{2m + 2k} |\nabla w_{e_i}|^2\Big) h \leq 0.
\end{equation}
In particular, $h$ is a positive superharmonic function on $\Sigma$, and thus $h$ is constant by our parabolicity assumption. Equation~\eqref{eq:h-PDE} and the fact that $h$ is a positive constant then imply that 
\[
\sum_{i = 1}^{2m + 2k} |\nabla w_{e_i}|^2 = 0 \text{ on }\Sigma,
\]
so each $w_{e_i}$ is constant. Going back to~\eqref{eq:wa-PDE}, we infer that $q(e_i) = 0$ for $i = 1, \cdots,2m + 2k$. Since $e_{1}, \cdots, e_{2m + 2k}$ is an arbitrary orthonormal basis of $\RR^{2m + 2k}$, we conclude that $q(a) = 0$ for all unit vector $a \in \RR^{2m + 2k}$.
\end{proof}

\subsection{Holomorphicity of stable minimal surfaces} \label{sec:holomo}

Throughout this section we assume that $m = 1$. Combining Proposition~\ref{prop:Q-vanish} and Lemma~\ref{lemm:Q-surface}, we get 
\begin{equation}\label{eq:Q-condition}
\bangle{A^+_{\tau, \tau}, v}\bangle{A^-_{\tau, \tau}, v} = \bangle{A^+_{\tau, \tau}, J_N v}\bangle{A^-_{\tau, \tau}, J_N v}, 
\end{equation}
for all $x\in \Sigma$, $\tau \in T_x\Sigma$ and $v \in N_x\Sigma$.

\begin{prop}\label{prop:Apm-vanish-local}
At each point $x$ on $\Sigma$, we have $A^{+}|_x = 0$ or $A^-|_x = 0$.
\end{prop}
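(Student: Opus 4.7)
Because $\Sigma$ is a surface, Lemma~\ref{lemm:Apm-sym-tr} together with the intertwining relation~\eqref{eq:Apm-intertwine} imply that $A^{\pm}|_x$ is entirely determined by the single vector $A^{\pm}_{\tau,\tau} \in N_x\Sigma$ for any fixed unit $\tau \in T_x\Sigma$: one has $A^{\pm}(\tau, J_\Sigma \tau) = \pm J_N A^{\pm}_{\tau,\tau}$ and $A^{\pm}(J_\Sigma\tau, J_\Sigma\tau) = -A^{\pm}_{\tau,\tau}$. Writing $\alpha := A^{+}_{\tau,\tau}$ and $\beta := A^{-}_{\tau,\tau}$, the proposition therefore reduces to the purely algebraic statement that, given the identity
\[
\langle \alpha, v \rangle \langle \beta, v \rangle = \langle \alpha, J_N v \rangle \langle \beta, J_N v \rangle \quad \text{for all } v \in N_x\Sigma
\]
(which is~\eqref{eq:Q-condition} evaluated at this $\alpha$, $\beta$), one must have $\alpha = 0$ or $\beta = 0$.

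The plan is to polarize the quadratic identity above in $v$, obtaining for all $v, w \in N_x \Sigma$
\[
\langle \alpha, v \rangle \langle \beta, w \rangle + \langle \alpha, w \rangle \langle \beta, v \rangle = \langle \alpha, J_N v \rangle \langle \beta, J_N w \rangle + \langle \alpha, J_N w \rangle \langle \beta, J_N v \rangle,
\]
and then to feed in three carefully chosen test pairs, repeatedly using that $J_N$ is skew-adjoint (in particular $\langle \alpha, J_N \alpha \rangle = 0$). Assuming $\alpha \ne 0$: the substitution $v = \alpha$ in the unpolarized identity yields $\langle \alpha, \beta \rangle = 0$; the substitution $(v, w) = (\alpha, J_N \alpha)$ in the polarized identity (using $J_N^2 = -\Id$) yields $\langle \beta, J_N \alpha \rangle = 0$; and the substitution $(v, w) = (\alpha, \beta)$ then has left-hand side $|\alpha|^2 |\beta|^2 + \langle \alpha, \beta\rangle^2 = |\alpha|^2|\beta|^2$, by the first orthogonality, and right-hand side $\langle \alpha, J_N \beta\rangle\langle \beta, J_N\alpha\rangle = 0$, by the second orthogonality. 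This forces $\beta = 0$, completing the dichotomy. Translating back via the first paragraph gives $A^+|_x = 0$ or $A^-|_x = 0$.

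The main obstacle is simply guessing the right test vectors, which can be motivated as follows. The condition~\eqref{eq:Q-condition} is equivalent to $\hat J (\alpha \odot \beta) = \alpha \odot \beta$ inside $\Sym^2 N_x \Sigma$, where $\hat J := J_N \otimes J_N$ is an involution. Decomposing $N_x \Sigma \otimes \CC = N_x^{1,0} \oplus N_x^{0,1}$ into $\pm i$-eigenspaces of $J_N$ and analyzing the $\pm 1$-eigenspaces of $\hat J$ on $\Sym^2$ shows that the condition is equivalent to $\alpha^{1,0} \odot \beta^{1,0} = 0$ in $\Sym^2 N_x^{1,0}$; the classical fact that a symmetric product of two vectors in a complex vector space vanishes only when one factor vanishes then yields the conclusion, with the direct argument above amounting to a concrete realization of this complexified picture.
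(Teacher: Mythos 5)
Your proof is correct and follows essentially the same route as the paper: reduce $A^{\pm}|_x$ to the vectors $A^{\pm}_{\tau,\tau}$ via Lemma~\ref{lemm:Apm-sym-tr} and~\eqref{eq:Apm-intertwine}, polarize~\eqref{eq:Q-condition}, and substitute test vectors built from $\xi^{\pm}=A^{\pm}_{\tau,\tau}$. The only (cosmetic) difference is that the paper makes the single substitution $(v,w)=(\xi^+,\xi^-)$ and reads off $|\xi^+|^2|\xi^-|^2+\bangle{\xi^+,\xi^-}^2+\bangle{\xi^+,J_N\xi^-}^2=0$ as a sum of squares, avoiding your case assumption $\alpha\neq 0$ and the two preliminary substitutions.
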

\begin{proof}
Fix any unit vector $\tau \in T_x \Sigma$. By Lemma~\ref{lemm:Apm-sym-tr}, we have 
\[
A^{\pm}_{\tau, J_\Sigma\tau} = A^{\pm}_{J_\Sigma\tau, \tau} = \pm J_N A^{\pm}_{\tau,\tau},
\]
and that
\[
A^{\pm}_{J_{\Sigma}\tau, J_{\Sigma}\tau} = \pm J_N A^{\pm}_{\tau, J_{\Sigma}\tau} = - A^{\pm}_{\tau, \tau}.
\]
Thus the tensors $A^{\pm}$ are completely determined, respectively, by the entries $A^{\pm}_{\tau, \tau}$. Below we write $\xi^{\pm} : = A^{\pm}_{\tau, \tau}$ for simplicity. To prove the proposition it suffices to show that at least one of $\xi^{\pm}$ vanishes. 

Polarizing~\eqref{eq:Q-condition}, we have that 
\begin{equation}\label{eq:Apm-relation-polarized}
\begin{split}
&\bangle{\xi^+, v}\bangle{\xi^-, w} + \bangle{\xi^+, w}\bangle{\xi^-, v}\\ =\ & \bangle{\xi^+, J_Nv}\bangle{\xi^-, J_Nw} + \bangle{\xi^+, J_Nw}\bangle{\xi^-, J_Nv} , \text{ for all }v, w \in N_x\Sigma.
\end{split}
\end{equation}
Setting $v = \xi^+$ and $w = \xi^-$ in the above gives
\[
|\xi^+|^2 |\xi^-|^2 + \bangle{\xi^+, \xi^-}^2 = 0 + \bangle{\xi^+, J_N\xi^-}\bangle{\xi^-, J_N\xi^+} =- \bangle{\xi^+, J_N\xi^-}^2.
\]
Therefore,
\[
|\xi^+|^2 |\xi^-|^2 + \bangle{\xi^+, \xi^-}^2  + \bangle{\xi^+, J_N\xi^-}^2 = 0.
\]
In particular, $|\xi^+||\xi^-| = 0$, and hence we have $\xi^+ = 0$ or $\xi^- = 0$ as desired.
\end{proof}

To continue, for any given $N\Sigma$-valued tensor $S$ on $\Sigma$, in analogy with~\eqref{eq:defn-Apm}, we let
\begin{equation} \label{eq:ddbar-defn}
D^{0, 1}_{v} S = \frac{1}{2}\big( D_{v}S + J_N D_{J_{\Sigma} v} S \big),\quad 
D^{1, 0}_{v} S = \frac{1}{2}\big( D_{v}S - J_N D_{J_{\Sigma} v} S \big),
\end{equation}
and observe the following relations analogous to~\eqref{eq:Apm-intertwine}:
\begin{equation}\label{eq:ddbar-intertwine}
\begin{split}
D^{0,1}_{J_{\Sigma }v} =\ & -J_N D^{0, 1}_{v},\\
D^{1, 0}_{J_{\Sigma }v} =\ & J_N D^{1, 0}_{v}.
\end{split}
\end{equation}
Also, the formal adjoints of $D^{0, 1}$ and $D^{1, 0}$, with respect to the usual $L^2$ inner product defined using $\langle \cdot, \cdot\rangle$, are given by 
\[
(D^{0, 1})^* = - e_i \hk D^{1, 0}_{e_i}, \quad (D^{1, 0})^* = -e_i \hk D^{0, 1}_{e_i},
\]
and a direct computation gives 
\begin{align}
(D^{1, 0})^*D^{1, 0} S + (D^{0, 1})^* D^{0, 1}S =\ & D^* DS,\label{eq:del-plus-delbar}\\
(D^{1, 0})^*D^{1, 0} S - (D^{0, 1})^* D^{0, 1}S = \ & \frac{1}{2}J_N \big( D^2_{e_i, J_{\Sigma}e_i}S - D^2_{J_{\Sigma}e_i, e_i} S\big), \label{eq:del-minus-delbar}
\end{align}
both of which are standard facts. As motivation for the next result, we mention that the complex structure $J_N$ makes $N\Sigma$ a complex vector bundle, and our assumption from the start that $DJ_N = 0$ implies that the connection $D$ is $\CC$-linear. Since we are now assuming that $\Sigma$ is a surface, the curvature of $D$ automatically has only the $(1, 1)$-component, and thus $D^{0, 1}$ is compatible with a holomorphic structure on $N\Sigma$. The next proposition can then be interpreted as establishing, respectively, the holomorphicity and anti-holomorphicity of $A^+$ and $A^-$.

\begin{prop}\label{prop:Apm-holo}
We have $D^{0, 1}A^+ = 0$ and $D^{1, 0}A^{-} = 0$.
\end{prop}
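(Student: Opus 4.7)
The plan is to reduce the vanishing $D^{0,1}A^+ = 0$ to a single Cauchy-Riemann-type equation for the normal vector field $\xi^+ := A^+(\tau_1, \tau_1)$ in a local orthonormal frame, and then verify that equation using Codazzi together with the minimality of $F$. The proof of $D^{1,0}A^- = 0$ is entirely analogous, with the signs tracking through the defining property $A^-(J_\Sigma v, w) = -J_N A^-(v, w)$.

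Fix $x \in \Sigma$ and a local orthonormal frame $\tau_1, \tau_2 = J_\Sigma \tau_1$ with $\nabla \tau_i|_x = 0$. By Lemma~\ref{lemm:Apm-sym-tr} and \eqref{eq:Apm-intertwine}, the tensor $A^+$ is determined by $\xi^+$ via
\[
A^+(\tau_1, \tau_2) = A^+(\tau_2, \tau_1) = J_N \xi^+, \qquad A^+(\tau_2, \tau_2) = -\xi^+.
\]
Differentiating these equalities and using $DJ_N = 0$ together with $\nabla \tau_i|_x = 0$, the covariant derivatives $(D_v A^+)(\tau_i, \tau_j)$ at $x$ are built from $D_v \xi^+$ in the same way. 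Combined with the intertwining $D^{0,1}_{J_\Sigma v} = -J_N D^{0,1}_v$ of \eqref{eq:ddbar-intertwine}, this reduces the pointwise condition $D^{0,1}A^+|_x = 0$ to the single identity
\[
D_{\tau_2} \xi^+ = J_N D_{\tau_1} \xi^+ \quad \text{at } x.
\]

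To verify this identity, write $A_{ij} = A(\tau_i, \tau_j)$, so that $\xi^+ = \tfrac{1}{2}(A_{11} - J_N A_{12})$. The Codazzi equation in flat ambient space—the same full symmetry of $DA$ already used in \eqref{eq:Codazzi}—yields, at $x$,
\[
D_{\tau_2} A_{11} = D_{\tau_1} A_{12}, \qquad D_{\tau_2} A_{12} = D_{\tau_1} A_{22}.
\]
Minimality of $F$ forces $A_{11} + A_{22} = 0$, hence $D_{\tau_1} A_{22} = -D_{\tau_1} A_{11}$, and so $D_{\tau_2}A_{12} = -D_{\tau_1}A_{11}$. Substituting into the formula for $D_{\tau_2}\xi^+$ gives
\[
D_{\tau_2}\xi^+ = \tfrac{1}{2}\bigl(D_{\tau_1} A_{12} + J_N D_{\tau_1} A_{11}\bigr) = J_N D_{\tau_1}\xi^+,
\]
as required.

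The corresponding computation for $A^-$ produces $D_{\tau_2}\xi^- = -J_N D_{\tau_1}\xi^-$, equivalent to $D^{1,0}A^- = 0$. The only subtlety in the proof is careful bookkeeping of signs; conceptually, the proposition expresses the fact that $A^+$ (respectively $A^-$) is a holomorphic (respectively anti-holomorphic) $N\Sigma$-valued quadratic differential on the Riemann surface $\Sigma$, relative to the holomorphic structure that $D^{0,1}$ induces on the complex bundle $(N\Sigma, J_N)$. No genuine obstacle arises: the content of the statement is essentially the Codazzi equation combined with minimality, packaged through the $(J_\Sigma, J_N)$-compatibility built into $A^\pm$.
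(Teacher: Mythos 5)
Your proof is correct and follows essentially the same route as the paper: both arguments use the symmetry and intertwining relations (with $\nabla J_\Sigma = 0$, $DJ_N = 0$) to reduce $D^{0,1}A^+ = 0$ to a single component equation at a point, and then verify it via the Codazzi equation in the flat ambient space together with trace-freeness of $A$ from minimality. Your formulation as the Cauchy--Riemann-type identity $D_{\tau_2}\xi^+ = J_N D_{\tau_1}\xi^+$ is just a repackaging of the paper's computation of $(D^{0,1}A^+)_{\tau,\tau,\tau}$, so there is nothing to add.
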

\begin{proof}
By~\eqref{eq:Apm-intertwine} and Lemma~\ref{lemm:Apm-sym-tr} we have 
\[
\begin{split}
A^+_{J_{\Sigma}v, w} =\ & J_NA^+_{v, w},\\
 A^+_{v, J_{\Sigma}w} =\ & A^+_{J_{\Sigma}w, v} = J_N A^+_{w, v} = J_NA^+_{v, w}.
\end{split}
\]
Given $x \in \Sigma$, choose any unit tangent vector $\tau$ at $x$. Then since $\{\tau, J_{\Sigma}\tau\}$ is an orthonormal basis for $T_x\Sigma$, the above relations, together with $D J_N = 0$, $\nabla J_{\Sigma} = 0$, and~\eqref{eq:ddbar-intertwine}, imply that to obtain $D^{0, 1}A^+ = 0$ at $x$, it suffices to prove that $(D^{0, 1}A^+)_{\tau, \tau, \tau} = 0$. For that purpose, we extend $\tau$ to a neighborhood of $x$ to be a unit vector field such that $\nabla\tau = 0$ at $x$. Then, using~\eqref{eq:ddbar-defn} and~\eqref{eq:defn-Apm}, at the point $x$ we have
\[
\begin{split}
4(D^{0, 1}A^+)_{\tau, \tau, \tau} =\ & 2 (DA^+)_{\tau, \tau, \tau} + 2J_N (DA^+)_{J_{\Sigma}\tau, \tau, \tau} \\
= \ & 2 D_{\tau}(A^+_{\tau, \tau}) + 2J_N D_{J_{\Sigma}\tau}(A^+_{\tau, \tau})\\
=\ & D_{\tau}(A_{\tau,\tau}) - J_N D_{\tau}(A_{J_{\Sigma}\tau, \tau}) + J_N D_{J_{\Sigma}\tau}(A_{\tau, \tau}) + D_{J_{\Sigma}\tau}(A_{J_{\Sigma}\tau, \tau}).
\end{split}
\]
Since the ambient space is flat, we can apply the Codazzi equation $D_{u} A_{v, w} = D_{v} A_{u, w}$ to the above to get
\[
\begin{split}
4(D^{0, 1}A^+)_{\tau, \tau, \tau} =\ & D_{\tau}(A_{\tau, \tau} + A_{J_{\Sigma}\tau, J_{\Sigma}\tau}) +  J_ND_{\tau}(A_{J_{\Sigma}\tau, \tau} - A_{J_{\Sigma}\tau, \tau}) = 0,
\end{split}
\]
since $A$ is trace-free. The proof that $D^{1, 0}A^{-} = 0$ is similar.
\end{proof}

It follows from Proposition~\ref{prop:Apm-holo} together with the identities~\eqref{eq:del-plus-delbar} and~\eqref{eq:del-minus-delbar} above that 
\begin{equation}\label{eq:A-plus-PDE}
\begin{split}
0 = \ & ((D^{0, 1})^*D^{0, 1}A^+)_{v, w}\\
 =\ &  \frac{1}{2}(D^*DA^+)_{v, w} - \frac{1}{2}J_N\big( F^D_{e_1, e_2}A^+_{v, w} - A^+_{R^{\Sigma}_{e_1, e_2}v, w} - A^+_{v, R^{\Sigma}_{e_1, e_2}w} \big),
\end{split}
\end{equation}
where $F^{D}$ is the curvature of $D$ and $R^{\Sigma}$ is the curvature of the induced metric on $\Sigma$. A similar differential equation is satisfied by $A^{-}$. As we show in Corollary~\ref{coro:UCP} below, this implies that $A^{+}$ and $A^-$ both satisfy a unique continuation principle.

\begin{coro}\label{coro:UCP}
Define the sets $\Omega^{\pm} = \{x \in \Sigma\ |\  A^{\pm}|_{x} = 0\}$ and let $\mathring{\Omega}^{\pm}$ denote their interiors. Then either $\mathring{\Omega}^{+}$ is empty, or it is all of $\Sigma$. The same statement holds for $\mathring{\Omega}^{-}$. 
\end{coro}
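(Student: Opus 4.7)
The plan is to interpret the differential equation \eqref{eq:A-plus-PDE} satisfied by $A^+$ as a second-order linear elliptic system with smooth coefficients, and then invoke a standard unique continuation principle. First, I would rewrite \eqref{eq:A-plus-PDE} in the schematic form
$$D^*D A^+ = \Phi(A^+),$$
where $\Phi \in \Gamma(\Endo(\Sym^2 T^*\Sigma \otimes N\Sigma))$ is the smooth pointwise bundle endomorphism built out of $J_N$, the normal bundle curvature $F^D$, and the Riemann curvature $R^{\Sigma}$. The principal part of this system is the connection Laplacian $D^*D$, which in any local trivialization becomes a diagonal uniformly elliptic operator with principal symbol $|\xi|^2 \Id$ plus lower order terms.

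With this setup in place, I would appeal to the unique continuation theorem of Aronszajn, extended to systems by Kazdan, which asserts that any solution of a linear elliptic system of this form that vanishes on a non-empty open subset of the connected manifold $\Sigma$ must vanish identically. Applied here: if $\mathring{\Omega}^+$ is non-empty, then $A^+$ vanishes on the non-empty open set $\mathring{\Omega}^+$, so by unique continuation $A^+ \equiv 0$ on $\Sigma$; hence $\Omega^+ = \Sigma$, and in particular $\mathring{\Omega}^+ = \Sigma$. The argument for $A^-$ is verbatim identical, starting from the analogous equation for $A^-$ obtained by swapping the roles of $D^{1,0}$ and $D^{0,1}$ in the identities \eqref{eq:del-plus-delbar}--\eqref{eq:del-minus-delbar} and using the second half of Proposition~\ref{prop:Apm-holo}.

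I do not expect any serious obstacles. The one mildly non-routine point is confirming that the cited version of Aronszajn--Kazdan unique continuation applies to sections of the vector bundle $\Sym^2 T^*\Sigma \otimes N\Sigma$ over a surface; this reduces to a purely local trivialization check that \eqref{eq:A-plus-PDE} has the required form $\Delta u + b(x)\cdot\nabla u + c(x) u = 0$ with smooth, locally bounded coefficients, which is immediate given that $J_N$, $F^D$, and $R^\Sigma$ are smooth and that the first-order term in fact vanishes.
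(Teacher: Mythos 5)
Your proposal is correct and takes essentially the same route as the paper: both read \eqref{eq:A-plus-PDE} (and its analogue for $A^-$) as a linear elliptic system with smooth coefficients and conclude by Aronszajn/Kazdan unique continuation, the paper merely unpacking this as an open-and-closed argument in which the local trivialization yields the differential inequality $|\Delta u| \leq C(|u| + |\nabla u|)$ and infinite-order vanishing at boundary points of $\mathring{\Omega}^{+}$, so that \cite[Theorem 1.8]{Kazdan} applies. One minor inaccuracy: in a general local frame the connection coefficients do produce first-order terms (the $\Gamma$ and $\theta$ contributions in the paper's estimates), so the first-order term does not vanish; but since the unique continuation results you cite allow a $b(x)\cdot\nabla u$ term with locally bounded coefficients, this does not affect your argument.
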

\begin{proof}
We only prove the statement for $\mathring{\Omega}^+$. The proof for $\mathring{\Omega}^{-}$ is essentially the same. Since $\Sigma$ is connected by assumption, and since $\mathring{\Omega}^+$ is open, to establish the asserted dichotomy it suffices to prove that $\mathring{\Omega}^{+}$ is also closed. To that end, let $x_0$ be a point in its closure, and introduce local orthonormal frames $\{\tau_{1}, \tau_{2}\}$ and $\{\nu_{1}, \cdots, \nu_{2k}\}$ for $T\Sigma$ and $N\Sigma$, respectively, on some open set $U$ containing $x_0$. Define
\[
u_{ij\alpha} = \bangle{A^+_{\tau_i, \tau_j}, \nu_{\alpha}}, \text{ for }i, j \in \{1,2\} \text{ and } \alpha \in \{1, \cdots , 2k\},
\]
and regard  $u = (u_{ij\alpha})$ as a function from $U$ into $\RR^{8k}$. Noting that $u$ vanishes on a neighborhood of each $x \in \mathring{\Omega}^+ \cap U$, and recalling that $x_0$ is the limit of a sequence of such points, we deduce from the smoothness of $u$ that 
\begin{equation}\label{eq:infinite-order-of-vanishing}
\nabla^mu|_{x_0} = 0, \text{ for all }m \in \NN \cup \{0\}.
\end{equation}
To continue, we let
\begin{equation}\label{eq:connection-coefficients}
\Gamma_{ij}^{l} = \bangle{\nabla_{\tau_i}\tau_j, \tau_l}, \quad  \theta_{i\alpha}^{\beta} = \bangle{D_{\tau_i}\nu_{\alpha}, \nu_{\beta}},
\end{equation}
and likewise regard $\Gamma$ and $\theta$ as vector-valued functions on $U$. Shrinking $U$ if necessary, we get constants $C_1$ and $C_2$ so that, pointwise on $U$, there holds
\begin{equation}\label{eq:connection-and-curvature-bounds}
|\Gamma| + |\nabla \Gamma| + |\theta| + |\nabla \theta| \leq C_1, \quad |F^{D}| + |R^{\Sigma}| \leq C_2.
\end{equation}
The Leibniz rule gives 
\begin{equation}\label{eq:A-plus-derivative-1}
\begin{split}
(D_{\tau_p}A^{+})_{\tau_i. \tau_j} =\ & \big(\tau_{p}(u_{ij\alpha}) + u_{ij\beta} \theta_{p\beta}^{\alpha} - \Gamma_{pi}^{l}u_{lj\alpha} - \Gamma_{pj}^{l}u_{il\alpha}\big)\nu_{\alpha},
\end{split}
\end{equation}
from which we get
\begin{equation}\label{eq:A-plus-bounds-1}
|DA^+| \leq C(|u| + |\nabla u|) \quad \text{on }U, 
\end{equation}
where $C$ depends only on $C_1$ and $k$. Differentiating once more in~\eqref{eq:A-plus-derivative-1} and summing over $p$ yields
\[
\begin{split}
-(D^*DA^+)_{\tau_i, \tau_j}  =\ &  D_{\tau_p}((D_{\tau_p}A^+)_{\tau_i, \tau_j}) - (D_{\nabla_{\tau_p}\tau_p}A^+)_{\tau_i, \tau_j} \\
&- \Gamma_{pi}^{l}(D_{\tau_p}A^+)_{\tau_l, \tau_j} - \Gamma_{pj}^{l}(D_{\tau_p}A^+)_{\tau_i, \tau_l}.
\end{split}
\]
Applying~\eqref{eq:A-plus-derivative-1} to the first two appearances of $DA^+$ on the right-hand side, singling out the expressions making up $(\Delta u_{ij\alpha})\nu_{\alpha}$, and then using~\eqref{eq:connection-and-curvature-bounds} and~\eqref{eq:A-plus-bounds-1} to bound the remaining terms, we find that 
\begin{equation}\label{eq:laplacian-A-plus-estimate-1}
|-(D^*DA^{+})_{\tau_i, \tau_j} - (\Delta u_{ij\alpha})\nu_{\alpha}| \leq C(|u| + |\nabla u|),
\end{equation}
where once again $C$ depends only on $C_1$ and $k$. On the other hand, from~\eqref{eq:A-plus-PDE}, which we have yet to use, together with the orthogonality of $J_{N}$ and the second bound in~\eqref{eq:connection-and-curvature-bounds}, we get
\begin{equation}\label{eq:laplacian-A-plus-estimate-2}
|(D^*D A^{+})_{\tau_i, \tau_j}| = \big| F^{D}_{\tau_1, \tau_2}A^+_{\tau_i, \tau_j} - A^+_{R^{\Sigma}_{\tau_1, \tau_2}\tau_i, \tau_j} - A^+_{\tau_i, R^{\Sigma}_{\tau_1, \tau_2}\tau_j} \big| \leq C|u|,
\end{equation}
where $C$ depends only on $C_2$ and $k$. Combining~\eqref{eq:laplacian-A-plus-estimate-2} with~\eqref{eq:laplacian-A-plus-estimate-1}, and noting that $i, j \in \{1, 2\}$ are arbitrary in both estimates, we obtain
\begin{equation}\label{eq:diff-ineq-for-UCP}
|\Delta u | \leq C(|u| + |\nabla u|) \quad \text{on }U,
\end{equation}
where $C$ depends only on $C_1$, $C_2$ and $k$. Recalling~\eqref{eq:infinite-order-of-vanishing}, we may now invoke~\cite[Theorem 1.8]{Kazdan} to conclude that $u$, and thus $A^+$, vanishes on a neighborhood of $x_0$. That is, $x_0 \in \mathring{\Omega}^+$. This proves that $\mathring{\Omega}^+$ is closed, and concludes the proof.
\end{proof}

\begin{prop}\label{prop:Apm-vanish-global}
At least one of $A^+, A^-$ vanishes identically on $\Sigma$.
\end{prop}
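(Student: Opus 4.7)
The plan is to combine the pointwise dichotomy from Proposition~\ref{prop:Apm-vanish-local} with the unique continuation dichotomy from Corollary~\ref{coro:UCP} via a Baire category argument. First, I recall the notation $\Omega^{\pm} = \{x \in \Sigma \mid A^{\pm}|_x = 0\}$ from Corollary~\ref{coro:UCP}. By Proposition~\ref{prop:Apm-vanish-local}, at every point of $\Sigma$ either $A^+$ or $A^-$ vanishes, so
\[
\Sigma = \Omega^+ \cup \Omega^-.
\]
Both $\Omega^+$ and $\Omega^-$ are closed subsets of $\Sigma$, because each is the zero set of a continuous tensor field.

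Next, I would invoke the Baire category theorem. Since $(\Sigma, g)$ is a complete Riemannian manifold, its underlying topological space is a Baire space. Therefore $\Sigma$ cannot be written as the union of two closed subsets, both of which have empty interior. Consequently at least one of $\mathring{\Omega}^+$ and $\mathring{\Omega}^-$ is non-empty.

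Finally, I would apply Corollary~\ref{coro:UCP}: whichever of $\mathring{\Omega}^{\pm}$ is non-empty must in fact equal all of $\Sigma$. This means the corresponding tensor $A^+$ or $A^-$ vanishes identically on $\Sigma$, as required.

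I do not anticipate any real obstacle here, since the substantive work has already been done in Proposition~\ref{prop:Apm-vanish-local} (the pointwise vanishing obtained from stability and parabolicity) and in Corollary~\ref{coro:UCP} (the unique continuation principle derived from the holomorphicity of $A^+$ and anti-holomorphicity of $A^-$). The only care needed is to note that $\Sigma$, being a connected complete Riemannian manifold, is a Baire space, so that a covering by two closed sets forces one of them to have non-empty interior; the rest is a direct appeal to the earlier results.
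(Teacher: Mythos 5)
Your proof is correct and takes essentially the same route as the paper: both write $\Sigma = \Omega^+ \cup \Omega^-$ using Proposition~\ref{prop:Apm-vanish-local}, note these sets are closed, and conclude via Corollary~\ref{coro:UCP} that one of them has nonempty interior and hence is all of $\Sigma$. The only cosmetic difference is that you invoke the Baire category theorem, while the paper argues by contradiction with the elementary fact that a union of two closed sets with empty interior is nowhere dense and so cannot equal $\Sigma$; either way the step is sound.
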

\begin{proof}
By Proposition~\ref{prop:Apm-vanish-local}, for all $x \in \Sigma$ we have $A^+|_x = 0$ or $A^-|_x = 0$. In other words, in the notation of Corollary~\ref{coro:UCP}, there holds
\begin{equation}\label{eq:Sigma-union}
\Sigma = \Omega^+ \cup \Omega^-.
\end{equation}
Now, if the desired conclusion does not hold, then Corollary~\ref{coro:UCP} forces both $\Omega^+$ and $\Omega^-$ to have empty interior, and thus be nowhere dense since they are already closed by continuity. Consequently $\Omega^+ \cup \Omega^-$ is nowhere dense as well, which contradicts~\eqref{eq:Sigma-union}. Thus at least one of $A^+$ or $A^-$ vanishes on all of $\Sigma$, as asserted.
\end{proof}

We are now ready to give the proof of Theorem~\ref{thm:main}.

\begin{proof}[Proof of Theorem~\ref{thm:main}]
By Proposition~\ref{prop:Apm-vanish-global}, Proposition~\ref{prop:infinitesimally-holomorphic}, and Remark~\ref{rmk:pm-holomorphic}, at least one of $J$ or $J_{-}$ is parallel with respect to $\overline{\nabla}$ as a section of $\Endo(T\Sigma \oplus N\Sigma)$. Below we assume that $\overline{\nabla}J = 0$. The argument is exactly the same in the other case. Viewing the standard basis vectors on $\RR^{2 + 2k}$ as parallel sections of $T\RR^{2 + 2k}$ and pulling them back by the immersion $F$, we obtain a $\overline{\nabla}$-parallel global orthonormal frame $e_1, \cdots, e_{2+2k}$ for $F^*(T\RR^{2 + 2k})$. Using $\overline{\nabla}J = 0$, we have on $\Sigma$ that
\[
\begin{split}
d\bangle{e_i, Je_j} =\ & \bangle{\overline{\nabla}e_i, Je_j} + \bangle{e_i, \overline{\nabla}(Je_j)} \\
=\ &  \bangle{\overline{\nabla}e_i, Je_j} + \bangle{e_i, J(\overline{\nabla}e_j)}  = 0.
\end{split}
\]
Therefore $J_{ij}: = \bangle{e_i, Je_j}$ is constant on $\Sigma$. 
The constant $(2+2k) \times (2+2k)$-matrix with $J_{ij}$ as entries then defines a parallel orthogonal complex structure $\overline{J}$ on $\RR^{2+2k}$ which restricts to $J$ along $\Sigma$. By the construction of the latter, we conclude that $\Sigma$ is holomorphic with respect to $\overline{J}$.
\end{proof}

\section{Possible further generalizations} \label{sec:conclusion}

It is natural to ask to what extent Theorem~\ref{thm:main} can be further generalized to other interesting geometric calibrations, in the context of special Riemannian holonomy. To be more precise, consider a constant coefficient calibration $p$-form $\alpha_\circ$ on $\RR^n$. If $\Sigma^p$ is an oriented $p$-dimensional submanifold immersed in $\RR^n$, let $E = F^*(T\RR^{n}) = T\Sigma \oplus N\Sigma$. We say that $\Sigma$ admits an $\alpha_\circ$-structure if there is a smooth section $\alpha$ of $\Lambda^p ( E^*)$ such that at every point $x \in \Sigma$, the element $\alpha_x \in \Lambda^p E_x^* = \Lambda^p T_x^* \RR^n$ corresponds to the model calibration form $\alpha_\circ$ on $\RR^n$ via some isomorphism $E_x = T_x \RR^n \cong \RR^n$. Suppose further that this $\alpha_\circ$-structure on $\Sigma$ is parallel with respect to the connection $\nabla \oplus D$ on $T\Sigma \oplus N\Sigma$ induced from the Euclidean connection $\overline{\nabla}$ on $\RR^n$. If $\Sigma$ is stable, minimal, complete, and parabolic, then can we conclude that there exists a $\overline{\nabla}$-parallel calibration form $\widetilde \alpha$ on $\RR^n$ whose pullback to $\Sigma$ is $\alpha$, and such that $\Sigma$ is $\widetilde \alpha$-calibrated? We note that the hypothesis that there exists a parallel $\alpha_\circ$-structure on $\Sigma$ is necessary for such a result to hold.

The above question was posed in~\cite[Question 3.20]{KM-extrinsic} in the context of \emph{compliant} calibrations. A calibration on $\RR^n$ is compliant if its stabilizer in $\mathrm{SO}(n)$ satisfies a particular Lie-theoretic property. In particular, all of the geometrically interesting calibrations, including K\"ahler, special Lagrangian, associative, coassociative, and Cayley, are compliant in this sense. We remark that a notion of ``infinitesimally $\alpha$-calibrated'' is formulated in~\cite[Definition 3.16]{KM-extrinsic} for compliant calibrations, generalizing the $(J_\Sigma, J_N)$-infinitesimally holomorphic condition recalled in Section~\ref{sec:inf-holo} above. The definition is given in terms of Lie-theoretic conditions on the second fundamental form and on the holonomy algebra of the induced connection on $E$. These are precisely the conditions that are imposed on the immersion $F \colon \Sigma \to \RR^n$ if it was indeed calibrated with respect to a parallel compliant calibration form $\alpha$. (In the particular case studied here, the holonomy condition was not explicitly used but it was present.)

The first such case would be the K\"ahler calibration $\frac{1}{m!} \omega^m$ for $m > 1$; that is, a submanifold $\Sigma^{2m}$ for $m > 1$ immersed into $\RR^{2m+2k}$. Now we need to assume that not only is there a $D$-parallel orthogonal complex structure $J_N$ on $N\Sigma$, but also a $\nabla$-parallel orthogonal complex structure on $T\Sigma$, which is automatic when $m=1$. (This pair is an $\alpha_\circ$-structure on $\Sigma$ for $\alpha_0 = \frac{1}{m!} \omega^m$.) Most of the intermediate results we derived in this paper hold for any $m$, but not all. In particular Lemmas~\ref{lemm:Apm-sym-tr} and~\ref{lemm:Q-surface} required $m=1$, and we used these in the final Section~\ref{sec:holomo} of the proof. If suitable adaptations of these results for $m>1$ hold, they must necessarily be significantly more complicated. This can be partly attributed to the fact that the calibration form $\frac{1}{m!} \omega^m$ for $2m$-dimensional calibrated submanifolds in $\CC^{m+k}$ corresponds to a vector cross product if and only if $m=1$. (See, for example~\cite{CKM-bubbling, LL}.)

If such stability theorems do hold for some other interesting geometric calibrations, then perhaps those whose calibration form corresponds to a vector cross product would be the best candidates, as they are direct analogues of the case considered here and by Micallef. These would be the associative calibration on $\RR^7$ and the Cayley calibration on $\RR^8$. The authors spent a considerable amount of time trying to prove such theorems. In particular, we considered the obvious generalizations of the  quadratic form $q$ associated to our choice of ``special variations'', analogous to that of Lemma~\ref{lemm:Q-computation}, and found that they did not have trace zero. In fact, the step in the proof of Theorem~\ref{thm:main} which uses the trace-free property of $q$ would have still worked, with a little more effort, if the trace of $q$ was nonpositive, but we found it to be strictly positive. This is somewhat mysterious, but it does not rule out the possibility that such theorems may still hold, albeit with proofs that are not directly analogous to the proof of our Theorem~\ref{thm:main}. It is also of course possible that the higher degree calibrations truly do have quite different behaviour. (For example, in~\cite{CKM-variational} we discovered one surprising way in which the Cayley calibration behaves quite differently from the special Lagrangian, associative, and coassociative calibrations.)

A closely related but different question is the following. Suppose that $\RR^n$, or a more general ambient Riemannian manifold $(M^n, h)$ for that matter, is equipped with a particular parallel calibration $p$-form $\alpha$ from the outset. Suppose $F \colon \Sigma^p \to (M^n, h)$ is a stable minimal immersion. Under what additional assumptions can we conclude that the immersion must be in fact $\alpha$-calibrated? A classical result in this direction is the theorem of Lawson--Simons~\cite{LS}, which shows in particular that any immersed compact stable minimal submanifold of $\CC\PP^{m}$ (with the Fubini--Study metric) must be complex up to changing orientation. A similar result was later obtained for $\HH\PP^m$ by Ohnita~\cite{Ohnita}. However, the authors are unaware of any existing results in this vein when the ambient space is a $\mathrm{G}_2$ or $\mathrm{Spin}(7)$ manifold. Note that this question is more analogous to the Bourguignon--Lawson and Siu--Yau results mentioned in Section~\ref{sec:intro}, as the additional geometric structures are already present, whereas in the Micallef result, the existence of the appropriate extra geometric structure is proved rather than assumed. 

Finally, in~\cite{CKM-bubbling} the authors initiated a study of \emph{Smith maps}, which are a natural class of maps that generalize $J$-holomorphic maps into K\"ahler manifolds to the setting of $\mathrm{G}_2$ or $\mathrm{Spin}(7)$ manifolds, possessing the correct properties with respect to conformal geometry. A Smith map $u \colon (\Sigma^p, g) \to (M^n, h, \alpha)$ is in particular a locally minimizing $p$-harmonic map. (See also~\cite{CKM-variational} and~\cite{IK} for more on Smith maps.) It is natural to ask whether a generalization of the Siu--Yau theorem~\cite{SY} to this setting might exist. That is, could it be that a stable $p$-harmonic map $u \colon (\Sigma^p, g) \to (M^n, h, \alpha)$ where $M$ satisfies some notion of ``positive bisectional curvature'' must be a Smith map? In this context, the notion of $\alpha$-sectional curvature introduced recently in~\cite{BIM} may be particularly relevant.

\addcontentsline{toc}{section}{References}

\bibliographystyle{plain}
\bibliography{CKM-stable-arXiv.bib}

\end{document}